\documentstyle[11pt,amssymb,amsmath,amscd,amsbsy,amsfonts,amsthm]{article}

\input xy
\xyoption{all}

\pagestyle{plain}
\textwidth=17.5cm
\oddsidemargin=-1cm
\evensidemargin=-1cm
\topmargin=-1cm
\textheight=23cm

\newtheorem{thm}{Theorem}[section]
\newtheorem{lem}[thm]{Lemma}
\newtheorem{prop}[thm]{Proposition}
\newtheorem{cor}[thm]{Corollary}
\newtheorem{rem}[thm]{Remark}
\newtheorem{dfn}[thm]{Definition}

\DeclareMathOperator{\Z}{\mathbb{Z}}
\DeclareMathOperator{\Gm}{{\mathbb G}_m}

\DeclareMathOperator{\Ima}{im}

\title{\textsc{Motivic cohomology of the Nisnevich classifying space of even Clifford groups}}
\author{Fabio Tanania}
\date{}

\begin{document}
	
	\maketitle
	
		\begin{abstract}
	In this paper, we consider the split even Clifford group $\Gamma^+_n$ and compute the mod 2 motivic cohomology ring of its Nisnevich classifying space. The description we obtain is quite similar to the one provided for spin groups in \cite{T2}. The fundamental difference resides in the behaviour of the second subtle Stiefel-Whitney class that is non-trivial for even Clifford groups, while it vanished in the spin-case.
	\end{abstract}

	\section{Introduction}
	
	Subtle characteristic classes were introduced by Smirnov and Vishik in \cite{SV} to approach the classification of quadratic forms by using motivic homotopical techniques. In particular, these characteristic classes arise as elements of the motivic cohomology ring of the Nisnevich classifying space $BG$ of a linear algebraic group $G$ over a field $k$. They naturally provide invariants for Nisnevich locally trivial $G$-torsors, which take value in the motivic cohomology of the base. What is probably more interesting is that they also provide invariants for \'etale locally trivial $G$-torsors, which take value this time in a more complicated and informative object, namely the motivic cohomology of the \v{C}ech simplicial scheme of the torsor under study. 
	
	In \cite{SV}, the authors compute the motivic cohomology ring with $\Z/2$-coefficients of $BO_n$, i.e. the Nisnevich classifying space of the split orthogonal group. Similarly to the topological picture, this cohomology ring is a polynomial algebra over the motivic cohomology of the ground field generated by certain classes $u_1,\dots,u_n$ called subtle Stiefel-Whitney classes. These invariants detect the power $I^n$ of the fundamental ideal of the Witt ring a quadratic form belongs to. In particular, the triviality of all subtle Stiefel-Whitney classes implies the triviality of the quadratic form itself. Besides, from the computation of $H(BO_n)$ it follows that the mod 2 motivic cohomology of $BSO_n$ is also a polynomial algebra generated by all subtle Stiefel-Whitney classes but the first.
	
	Following \cite{SV}, we studied the motivic cohomology rings of the Nisnevich classifying spaces of unitary groups in \cite{T1}, of spin groups in \cite{T2} and of projective general linear groups in \cite{T3}. This paper is a natural follow-up of \cite{T2}. In fact, we focus here on computing the motivic cohomology of the Nisnevich classifying space of even Clifford groups. These algebraic groups are closely related to spin groups. On the level of torsors, this is visible from the fact that a spin-torsor yields a quadratic form in $I^3$ through a surjective map with trivial kernel, while the torsors of the even Clifford groups are exactly the quadratic forms in $I^3$. 
	
	The topological counterpart of the even Clifford group $\Gamma^+_n$ is the Lie group $Spin^c(n)$. The singular cohomology of the classifying space of $Spin^c(n)$ was computed by Harada and Kono in \cite{HK}. The main result we obtain in this article is a motivic version of \cite[Theorem 3.5]{HK}. More precisely, we prove the following.
		\begin{thm}\label{intro}
		Let $k$ be a field of characteristic different from $2$ containing a square root of $-1$. Then, for any $n \geq 2$, there exists a cohomology class $e_{2^{l(n)}}$ in bidegree $(2^{l(n)-1})[2^{l(n)}]$ such that the natural homomorphism of $H$-algebras 
		$$H(BSO_n)/I^{\circ}_{l(n)} \otimes_H H[e_{2^{l(n)}}] \rightarrow H(B\Gamma^+_n)$$
		is an isomorphism, where $I^{\circ}_{l(n)}$ is the ideal generated by $\theta_1, \dots ,\theta_{l(n)-1}$ and $l(n)=[\frac{n+1}{2}]$.
	\end{thm}
The assumption on the characteristic of the ground field is necessary since the mod $2$ motivic cohomology of the point and the mod $2$ motivic Steenrod algebra are well understood in characteristic different from $2$ (see \cite{V3}). Moreover, we require also that $k$ contains a square root of $-1$, since in this case the action of the mod $2$ motivic Steenrod algebra on the mod $2$ motivic cohomology of the point is trivial, making our computation easier. Anyways, we suspect that a result similar to Theorem \ref{intro} would still hold after dropping this last assumption, but with more complicated relations involving Steenrod operations and $\rho=Sq^1 \tau$, where $\rho$ is the class of $-1$ in mod $2$ Milnor K-theory and $\tau$ is the generator of $H^{0,1} \cong \Z/2$.

The similarity between Theorem \ref{intro} and the computation for the spin-case (see Theorem \ref{bso}) is clear. Nonetheless, a crucial difference is that, while $u_2$ is trivial in $H(BSpin_n)$, it is not in $H(B\Gamma^+_n)$ where the ideal of relations $I^{\circ}_{l(n)}$ is generated by the action of the motivic Steenrod algebra over $u_3=Sq^1u_2$. This also explains the gap between $k(n)$ in Theorem \ref{bso} and $l(n)$ in Theorem \ref{intro}, which is due to discrepancies in the maximal length of the regular sequences in $H(BSO_n)$ obtained by applying certain Steenrod operations to $u_2$ and $u_3$ respectively.

We conclude by pointing out that understanding the motivic cohomology of Nisnevich classifying spaces also helps in obtaining information about the structure of the Chow ring of \'etale classifying spaces $B_{\acute et}G$ (see \cite{To}), which is an interesting object of study that is particularly challenging to fully grasp. For example, the Chow ring of $B_{\acute et}\Gamma^+_n$ has been recently investigated by Karpenko in \cite{K} where he proves a conjecture that allows him, as a consequence, to compute the exponent indexes of spin grassmannians. In our case, we will show how to apply Theorem \ref{intro} to compute the subring generated by Chern classes of the Chow ring mod 2 of $B_{\acute et}\Gamma^+_n$, modulo nilpotents, sheding new light on its complicated structure. \\

    \textbf{Outline.} In Section 2, we report notations and preliminary results that we will use in this paper. In particular, we recall the Thom isomorphism in the triangulated category of motives over a simplicial base, which provides Gysin long exact sequences in motivic cohomology. Besides, we recall definitions and properties of classifying spaces in motivic homotopy theory, as well as the computation of the mod 2 motivic cohomology of $BO_n$, $BSO_n$ and $BSpin_n$. In Section 3, we investigate regular sequences in $H(BSO_n)$ constructed starting from $u_3$ by acting with specific Steenrod operations. Finally, in Section 4, we exploit the Gysin sequence relating $B\Gamma^+_n$ and $BSpin_n$, and the regular sequences studied before, in order to fully compute the mod 2 motivic cohomology of $B\Gamma^+_n$. After that, we also obtain a complete description of the reduced Chern subring of the Chow ring mod 2 of $B_{\acute et}\Gamma^+_n$.\\
	
	\textbf{Acknowledgements.} I wish to sincerely thank Nikita Karpenko for encouraging me to write this paper. I am also grateful to the referee for very helpful comments.
	
	\section{Notations and preliminaries}
	
	We start by fixing some notations we will regularly use in this paper.
	
	\begin{tabular}{c|c}
		$k$ & field of characteristic different from $2$ containing $\sqrt{-1}$\\
		$R$ & commutative ring with identity\\
		$Y_{\bullet}$ & smooth simplicial scheme over $k$\\
		$Spc_*(Y_{\bullet})$  & category of pointed motivic spaces over $Y_{\bullet}$\\
		${\mathcal H}_s(k)$ & simplicial homotopy category over $k$\\
		${\mathcal {DM}}^{-}_{eff}(k,R)$ & triangulated category of effective motives over $k$ with $R$-coefficients\\
		${\mathcal DM}^-_{eff}(Y_{\bullet},R)$ & triangulated category of effective motives over $Y_{\bullet}$ with $R$-coefficients\\
		$T$ & unit object in ${\mathcal {DM}}^{-}_{eff}(k,R)$\\
		$H_{top}(-)$ & singular cohomology with $\Z/2$-coefficients\\
		$H(-)$ & motivic cohomology with $\Z/2$-coefficients\\
		$H$ & motivic cohomology with $\Z/2$-coefficients of $Spec(k)$\\
		$K^M(k)/2$ & Milnor K-theory of $k$ mod 2\\
		$w_i$ & $i$th Stiefel-Whitney class in $H_{top}(BSO_n)$\\
		$u_i$ & $i$th subtle Stiefel-Whitney class in $H(BSO_n)$\\
		$\rho_j$ & the element $Sq^{2^{j-1}}Sq^{2^{j-2}} \dots Sq^2Sq^1w_2$ in $H_{top}(BSO_n)$\\
		$\theta_j$ & the element $Sq^{2^{j-1}}Sq^{2^{j-2}} \dots Sq^2Sq^1u_2$ in $H(BSO_n)$\\
		$\Gamma^+_n$ & even Clifford group
	\end{tabular}\\

The collection of results \cite[Theorem 6.1, Corollary 6.9 and Corollary 7.5]{V3} implies that $H \cong K^M(k)/2[\tau]$, where $\tau$ is the non-trivial class in $H^{0,1} \cong \Z/2$ and $H^{n,n} \cong K_n^M(k)/2$.

Note that, since we are working over a field containing the square root of $-1$, all Steenrod squares $Sq^i$, as defined in \cite{V2}, act trivially on $H$.

Since we will mainly work in the triangulated category of motives over a simplicial scheme defined by Voevodsky in \cite{V1}, we recall a few definitions and propositions about it that will be useful later on to prove our main results.

\begin{dfn}
For any smooth simplicial scheme $Y_{\bullet}$ over $k$, denote by $c:Y_{\bullet} \rightarrow Spec(k)$ the projection to the base. Then, we can define the Tate objects $T(q)[p]$ in ${\mathcal DM}^-_{eff}(Y_{\bullet},R)$ as $c^*(T(q)[p])$. 
\end{dfn}
	
\begin{dfn}
A smooth morphism of smooth simplicial schemes $\pi:X_{\bullet} \rightarrow Y_{\bullet}$ is called coherent if there is a cartesian square
$$
\xymatrix{
	X_j \ar@{->}[r]^{\pi_j} \ar@{->}[d]_{X_{\theta}} & Y_j \ar@{->}[d]^{Y_{\theta}}\\
	X_i \ar@{->}[r]_{\pi_i} & Y_i
}
$$
for any simplicial map $\theta:[i] \rightarrow [j]$. 
\end{dfn}	
	
	Denote by $CC(Y_{\bullet})$ the simplicial set obtained from $Y_{\bullet}$ by applying the functor $CC$ that sends any connected scheme to the point and respects coproducts.

	\begin{prop}\label{Thom1}
		Let $\pi:X_{\bullet} \rightarrow Y_{\bullet}$ be a smooth coherent morphism of smooth simplicial schemes over $k$ and $A$ a smooth $k$-scheme such that:\\
		1) $X_0$ is isomorphic to $Y_0 \times A$ and, under this isomorphism, $\pi_0$ becomes equal to the projection map $Y_0 \times A \rightarrow Y_0$;\\
		2) $H^1(CC(Y_{\bullet}),R^{\times}) \cong 0$;\\
		3) $M(A) \cong T \oplus T(r)[s-1]$ in $ {\mathcal {DM}}_{eff}^-(k,R)$ for arbitrary integers $r$ and $s$.
		
		Then, $M(Cone(\pi)) \cong T(r)[s]$ in $ {\mathcal {DM}}_{eff}^-(Y_{\bullet},R)$ where $Cone(\pi)$ is the cone of $\pi$ in $Spc_*(Y_{\bullet})$. Hence, we get a Thom isomorphism of $H(Y_{\bullet},R)$-modules 
		$$H^{*-s,*'-r}(Y_{\bullet},R) \rightarrow H^{*,*'}(Cone(\pi),R).$$
	\end{prop}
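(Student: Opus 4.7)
My plan is to identify $\pi$ as a trivial $A$-bundle at each simplicial degree, compute the cone there, and then glue the resulting local Thom identifications into a global one using hypothesis (2). For the first step, applying the coherence assumption to the unique simplicial map $[0]\to[i]$ produces a cartesian square giving $X_i\cong X_0\times_{Y_0}Y_i$. Combined with (1), this identifies $X_i\cong Y_i\times A$ with $\pi_i$ the projection, and all structure maps of $X_\bullet$ are determined by those of $Y_\bullet$, so that $\pi$ is, up to the combinatorics of $Y_\bullet$, a relative product with $A$.

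\textbf{Local Thom isomorphism.} Inside $\mathcal{DM}^-_{eff}(k,R)$, the symmetric monoidal structure combined with (3) yields $M(X_i)\cong M(Y_i)\oplus M(Y_i)(r)[s-1]$, with $M(\pi_i)$ the projection onto the first summand. Its cofiber in the triangulated category is therefore $M(Y_i)(r)[s]$, which, viewed over $Y_\bullet$, matches the restriction to simplicial degree $i$ of the Tate object $T(r)[s]$ in $\mathcal{DM}^-_{eff}(Y_\bullet,R)$.

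\textbf{Descent.} The heart of the proof is promoting these levelwise identifications into a single isomorphism $M(Cone(\pi))\cong T(r)[s]$ in $\mathcal{DM}^-_{eff}(Y_\bullet,R)$. The splitting of $M(A)$ provided by (3) is canonical only up to the group $\Hom(T,T)^{\times}\cong R^\times$ of automorphisms of the unit summand, so the levelwise trivializations fit together only up to this ambiguity. Measured along $1$-simplices, the discrepancy defines a \v{C}ech $1$-cocycle on $CC(Y_\bullet)$ with values in $R^\times$, and hypothesis (2) forces it to be a coboundary, allowing the local data to be rigidified into a global isomorphism. This descent argument is where the serious bookkeeping lies, and identifying the gluing obstruction precisely with a class in $H^1(CC(Y_\bullet),R^\times)$ is the main technical hurdle.

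\textbf{From motives to cohomology.} Once the isomorphism in $\mathcal{DM}^-_{eff}(Y_\bullet,R)$ is in hand, the cohomological Thom isomorphism is immediate: by the definition of motivic cohomology of the simplicial base,
$$H^{*,*'}(Cone(\pi),R)=\Hom_{\mathcal{DM}^-_{eff}(Y_\bullet,R)}(M(Cone(\pi)),T(*')[*]),$$
and substituting $M(Cone(\pi))\cong T(r)[s]$ and reindexing gives $H^{*-s,*'-r}(Y_\bullet,R)$. The $H(Y_\bullet,R)$-linearity of the resulting map is automatic since it is induced by an isomorphism in $\mathcal{DM}^-_{eff}(Y_\bullet,R)$.
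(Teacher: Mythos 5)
Your roadmap — levelwise trivialization via coherence, a K\"unneth-type splitting from hypothesis (3), descent controlled by $H^1(CC(Y_\bullet),R^\times)$, and the passage to cohomology — is the natural one and almost certainly mirrors the cited proof in \cite[Proposition~4.2]{T2}. However, the write-up leaves the decisive step as a black box and contains a couple of slips that matter for that step.

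First, there is no \emph{unique} simplicial map $[0]\to[i]$: there are $i+1$ of them, and this multiplicity is precisely what produces the ambiguity you later try to control. A choice of vertex map gives an identification $X_i\cong X_0\times_{Y_0}Y_i\cong Y_i\times A$, but different vertices give a priori different identifications, and the face maps of $X_\bullet$ need not be of the form $Y_\theta\times\mathrm{id}_A$ under any single choice. So the reduction is not ``$\pi$ is a relative product with $A$ up to combinatorics of $Y_\bullet$''; it is a levelwise product together with transition data, and this phrasing hides exactly the content of hypothesis (2).

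Second, in the descent step you assert that the discrepancy is a \v{C}ech $1$-cocycle valued in $R^\times$ and stop. To make this a proof one must: (i) identify the automorphism group of $T(r)[s]$ over a connected base with $R^\times$, which requires knowing $\Hom_{\mathcal{DM}^-_{eff}}(T,T)\cong R$ and the vanishing of relevant negative-weight groups; (ii) verify the cocycle condition on $2$-simplices using the simplicial identities and the compatibility of the cartesian squares; (iii) explain how hypothesis (1) provides the preferred trivialization over $Y_0$ that anchors the cocycle and turns vanishing of $H^1$ into an actual isomorphism, not merely an abstract classification statement; and (iv) justify the use of a ``restriction to simplicial degree $i$'' functor $\mathcal{DM}^-_{eff}(Y_\bullet,R)\to\mathcal{DM}^-_{eff}(Y_i,R)$ with the expected behavior on Tate twists — this is a feature of Voevodsky's construction that must be invoked explicitly, not assumed. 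As you acknowledge, this bookkeeping is the ``main technical hurdle,'' but a proof that defers the main hurdle is not yet a proof. The final cohomological step is fine once the motivic isomorphism is in hand.
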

	\begin{proof}
	See \cite[Proposition 4.2]{T2}.
	\end{proof}
	
	\begin{dfn}
	We call Thom class of $\pi$ and denote by $\alpha$ the image of $1$ under the Thom isomorphism of Proposition \ref{Thom1}.
	\end{dfn}

The following result guarantees that the Thom isomorphism from Proposition \ref{Thom1} is functorial.
	
	\begin{prop}\label{Thom2}
	   Suppose there is a cartesian square
		$$
		\xymatrix{
			X_{\bullet} \ar@{->}[r]^{\pi} \ar@{->}[d]_{p_X} & Y_{\bullet} \ar@{->}[d]^{p_Y}\\
			X'_{\bullet} \ar@{->}[r]_{\pi'} & Y'_{\bullet}
		}
		$$
		such that $Y_0$ is connected, $p_X$ and $p_Y$ are smooth, $\pi$ and $\pi'$ are smooth coherent with fiber $A$ satisfying all conditions from Proposition \ref{Thom1}.
		Then, the induced homomorphism in motivic cohomology
		$$H(Cone(\pi'),R) \rightarrow H(Cone(\pi),R)$$
		maps $\alpha'$ to $\alpha$, where $\alpha'$ and $\alpha$ are the respective Thom classes.
	\end{prop}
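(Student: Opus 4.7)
My plan is to exploit the fact that the Thom isomorphism of Proposition \ref{Thom1} is constructed naturally in the simplicial base, hence automatically compatible with the cartesian pullback in question. By definition, $\alpha'$ and $\alpha$ are the images of $1 \in H^{0,0}(Y'_{\bullet},R)$ and $1 \in H^{0,0}(Y_{\bullet},R)$ respectively under the Thom isomorphisms, and $p_Y^*$ preserves the unit. Thus it is enough to check commutativity of
$$
\xymatrix{
H^{0,0}(Y'_{\bullet},R) \ar[r]^-{\sim} \ar[d]_{p_Y^*} & H^{s,r}(Cone(\pi'),R) \ar[d]^{p_Y^*}\\
H^{0,0}(Y_{\bullet},R) \ar[r]^-{\sim} & H^{s,r}(Cone(\pi),R)
}
$$
with horizontal arrows the respective Thom isomorphisms.

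To achieve this, I would first use cartesianness of the square to identify $p_Y^* Cone(\pi') \cong Cone(\pi)$ in $Spc_*(Y_{\bullet})$, which works because $p_Y^*$ preserves pointed colimits. Consequently, the Thom isomorphism $M(Cone(\pi')) \cong T(r)[s]$ in $\DM^-_{eff}(Y'_{\bullet},R)$ pulls back under $p_Y^*$ to an isomorphism $M(Cone(\pi)) \cong T(r)[s]$ in $\DM^-_{eff}(Y_{\bullet},R)$, since $p_Y^*$ sends Tate objects to Tate objects. The task then reduces to identifying this pulled-back isomorphism with the Thom isomorphism of $\pi$ constructed directly via Proposition \ref{Thom1}.

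Both isomorphisms are assembled from the same input on the fiber, namely the decomposition $M(A) \cong T \oplus T(r)[s-1]$, and both rely on condition $(2)$ to glue the local product structure across simplicial components (condition $(2)$ for $Y_\bullet$ is implied by $(2)$ for $Y'_\bullet$ via the induced map $CC(Y_{\bullet}) \to CC(Y'_{\bullet})$). Hence the two isomorphisms differ by an automorphism of $T(r)[s]$, i.e.\ a unit in $R^\times$. Restricting to the component $Y_0 \times A \to Y_0$, which by hypothesis $(1)$ is the trivial projection and over which the Thom isomorphism is forced to be the canonical product decomposition, pins this unit down to $1$, yielding the desired commutativity. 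The main obstacle is precisely this last identification, which requires unravelling the construction carried out in \cite[Proposition 4.2]{T2} and tracking it through the cartesian pullback; once the naturality of each step of that construction is made explicit, the functoriality of the Thom class follows at once.
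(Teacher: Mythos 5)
The paper does not give a proof of this proposition at all; it simply cites \cite[Proposition 4.3 and Corollary 4.4]{T2}, so there is no in-paper argument to compare with. That said, your overall strategy — pull the Thom isomorphism of $\pi'$ back along the cartesian square, observe that it and the directly constructed Thom isomorphism of $\pi$ differ by an automorphism of $T(r)[s]$ in $\mathcal{DM}^-_{eff}(Y_\bullet,R)$, and then pin that automorphism down to the identity using connectedness of $Y_0$ together with hypothesis (1) — is the natural and expected line of argument, and it is almost certainly the substance of the cited result. The key ingredients you identify (that cones commute with pullback along a cartesian square, that $\operatorname{Aut}(T(r)[s])\cong H^{0,0}(Y_\bullet,R)^\times\cong R^\times$ because $Y_0$ is connected, and that the $0$-simplicial component provides a rigidification) are the right ones.

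Two caveats. First, the parenthetical claim that ``condition $(2)$ for $Y_\bullet$ is implied by $(2)$ for $Y'_\bullet$ via the induced map $CC(Y_\bullet)\to CC(Y'_\bullet)$'' is not justified — there is no reason a map of simplicial sets should induce a surjection or anything useful on $H^1(-,R^\times)$ — and it is also unnecessary, since the hypothesis of the proposition explicitly requires \emph{both} $\pi$ and $\pi'$ to satisfy all three conditions of Proposition~\ref{Thom1}; you should simply delete it. Second, and more importantly, the crucial step — that the restriction of both candidate isomorphisms to $Y_0$ is the one coming canonically from the splitting $M(A)\cong T\oplus T(r)[s-1]$, so that the discrepancy unit is $1$ — is exactly the place where you would need to unravel the construction of \cite[Proposition 4.2]{T2}, and you candidly flag this yourself. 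As written, the argument is a correct plan rather than a complete proof: without showing that the Thom isomorphism produced by that construction is normalized over the $0$-component in a way compatible with base change, the ``restrict to $Y_0$'' step does not close. This is the same information that the paper delegates to \cite[Proposition 4.3 and Corollary 4.4]{T2}, so you have located the right gap; filling it requires opening that reference.
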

	\begin{proof}
	See \cite[Proposition 4.3 and Corollary 4.4]{T2}.
	\end{proof}
	
	We now recall from \cite{MV} the definitions of the Nisnevich and \'etale classifying spaces of linear algebraic groups.
	
	Let $G$ be a linear algebraic group over $k$ and $EG$ the simplicial scheme defined by $(EG)_n=G^{n+1}$, with partial projections as face maps and partial diagonals as degeneracy maps. The space $EG$ is endowed with a right free $G$-action provided by the operation in $G$. 
	
	\begin{dfn}
	The Nisnevich classifying space of $G$ is the quotient $BG=EG/G$.
	\end{dfn} 

The morphism of sites $\pi:(Sm/k)_{\acute{e}t} \rightarrow (Sm/k)_{Nis}$ induces an adjunction between simplicial homotopy categories 
	\begin{align*}
		{\mathcal H}_s((S&m/k)_{\acute{e}t})\\
		\pi^* \uparrow & \downarrow R\pi_*\\
		{\mathcal H}_s((S&m/k)_{Nis}).
	\end{align*}

 \begin{dfn}
The \'etale classifying space of $G$ is defined by $B_{\acute{e}t}G=R\pi_*\pi^*BG$.
 \end{dfn}
	
	Let $H$ be an algebraic subgroup of $G$. Then, we can define the simplicial scheme $\widehat{B}H=EG/H$ with respect to the embedding $H \hookrightarrow G$. Denote by $j$ the induced morphism $BH \rightarrow \widehat{B}H$. 
	
 \begin{prop}\label{BG}
		Let $H \hookrightarrow G$ be such that all rationally trivial $H$-torsors and $G$-torsors are Zariski-locally trivial. If the map $Hom_{{\mathcal H}_s(k)}(Spec(K),B_{\acute{e}t}H) \rightarrow Hom_{{\mathcal H}_s(k)}(Spec(K),B_{\acute{e}t}G)$ has trivial kernel for any finitely generated field extension $K$ of $k$, then $j$ is an isomorphism in ${\mathcal H}_s(k)$. 
	\end{prop}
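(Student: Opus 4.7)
The plan is to exhibit $j: BH \to \widehat{B}H$ as the pullback of the map $EH \to EG$ along the canonical $H$-torsor structure, and then descend the resulting weak equivalence of total spaces to the base in ${\mathcal H}_s(k)$ using the two hypotheses.

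First I would set up the commutative diagram
\[
\xymatrix{
EH \ar[r] \ar[d] & EG \ar[d] \\
BH \ar[r]^-j & \widehat{B}H
}
\]
whose vertical maps are the canonical $H$-torsors of simplicial schemes and whose top horizontal map is induced by $H \hookrightarrow G$. A direct inspection of the simplicial levels shows this square is Cartesian, with $(EH)_n = H^{n+1}$, $(BH)_n = H^n$, $(EG)_n = G^{n+1}$ and $(\widehat{B}H)_n = G^n \times G/H$. The top horizontal map is a sectionwise weak equivalence of pointed simplicial sheaves: for any smooth test scheme $X$, both $EH(X)$ and $EG(X)$ are contractible simplicial sets via the standard extra-degeneracy contracting homotopy, so the map between them induced by $H \hookrightarrow G$ is a weak equivalence.

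Next, I would reduce the assertion to a Nisnevich-stalkwise check, i.e.\ to showing that the induced map of naive simplicial sets $BH(R) \to \widehat{B}H(R)$ is a weak equivalence for every Henselian local smooth $k$-algebra $R$. This is where the first hypothesis enters: the assumption that rationally trivial $H$- and $G$-torsors are Zariski-locally trivial, combined with Hensel's lemma applied to the smooth schemes $G$, $H$ and $G/H$, guarantees that the naive $R$-point simplicial set computes the mapping space $\mathrm{Hom}_{{\mathcal H}_s(k)}(\mathrm{Spec}(R), -)$ on stalks. Once this is granted, the Cartesian square above descends the sectionwise weak equivalence $EH \to EG$ to the required equivalence $j$.

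The residual content is the analysis over a field $K$. The simplicial sets $BH(K)$ and $\widehat{B}H(K)$ are respectively the nerve of $H(K)$ and the nerve of the action groupoid of $G(K)$ on $(G/H)(K)$, both $1$-truncated with $\pi_1 \cong H(K)$ naturally identified on the two sides by $j$. The bijection on $\pi_0$ is provided by the second hypothesis: the exact sequence of non-abelian \'etale cohomology
\[
G(K) \to (G/H)(K) \to H^1_{\acute{e}t}(K,H) \to H^1_{\acute{e}t}(K,G)
\]
identifies $\pi_0 \widehat{B}H(K) = G(K) \backslash (G/H)(K)$ with the kernel of $H^1_{\acute{e}t}(K,H) \to H^1_{\acute{e}t}(K,G)$, which vanishes by assumption. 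The main obstacle will be to make the stalkwise reduction rigorous -- matching the naive $R$-point simplicial sets with the mapping space in ${\mathcal H}_s(k)$ on Nisnevich stalks -- which is precisely where the rational triviality hypothesis is indispensable; once that is handled, the remainder of the argument is just the non-abelian cohomology sequence combined with the kernel assumption.
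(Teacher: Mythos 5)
Your skeleton is close to the right picture, but three of its steps are problematic, and one of them is a genuine logical gap.

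The structural observations are correct: the square
$$
\xymatrix{
EH \ar[r] \ar[d] & EG \ar[d] \\
BH \ar[r]^-j & \widehat{B}H
}
$$
is Cartesian, $EH\to EG$ is a levelwise (hence local) weak equivalence between simplicially contractible objects, $BH(R)$ is the nerve of the group $H(R)$, and $\widehat{B}H(R)$ is the nerve of the translation groupoid $G(R)\curvearrowright (G/H)(R)$. The identification of $\pi_0\widehat{B}H(K)$ with $\ker\bigl(H^1_{\acute{e}t}(K,H)\to H^1_{\acute{e}t}(K,G)\bigr)$ via the exact sequence of non-abelian cohomology is also right, and this is indeed where the second hypothesis is consumed.

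The gap is the sentence asserting that the Cartesian square ``descends'' the weak equivalence $EH\to EG$ to $j$. This is not a valid inference. Even working stalkwise, where both vertical maps are Kan fibrations with the same fibre (by the Cartesian property) and the top map is a weak equivalence of contractible Kan complexes, the long exact sequence of the fibration only gives $\pi_n(BH(R))\cong\pi_n(\widehat{B}H(R))$ for $n\geq 1$ at the basepoint; it says nothing about $\pi_0$. And $\pi_0$ is precisely the nontrivial part: $BH(R)$ has a single vertex, while $\pi_0\widehat{B}H(R)=G(R)\backslash (G/H)(R)$ can very well be nontrivial, and showing it is a point is exactly where the kernel hypothesis must enter. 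So the ``descent'' step and the ``residual'' field analysis are not two independent pieces of evidence; the latter is the entire content of the missing $\pi_0$ case, and the former should simply be discarded (or restated as the LES argument for $\pi_{\geq 1}$).

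Two further points need tightening. First, the stalkwise check is over Henselian local essentially smooth $k$-algebras $R$, not fields; you acknowledge this but don't close it. The passage from the residue field $\kappa$ (which is finitely generated over $k$, so the kernel hypothesis applies) back up to $R$ uses Hensel's lemma for the smooth $H$-torsor $P$: if $P\times^H G$ is trivial over $R$, then $P_\kappa$ is trivial by the kernel condition, and smoothness of $P/R$ plus Henselianness gives an $R$-point of $P$. This should be said explicitly, since it is the place where the triviality over a field is promoted to triviality over the stalk. Second, your stated role for the rational-triviality hypothesis --- that it makes ``the naive $R$-point simplicial set compute $\Hom_{\mathcal H_s(k)}(\mathrm{Spec}(R),-)$'' --- does not match what the reduction actually requires. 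Reducing an $\mathcal H_s(k)$-isomorphism to a Nisnevich-stalkwise weak equivalence is the definition of local weak equivalence and needs no hypothesis at all; and over a Henselian local ring Nisnevich-locally trivial torsors are trivial automatically. The Grothendieck--Serre-type hypothesis is used elsewhere in the chain of results the paper cites from \cite{T2} (Propositions 5.1, 5.3 and Corollary 5.2), and your argument as written does not exhibit a place where it is indispensable, which is itself a sign that something in the reduction is being glossed over.
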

	\begin{proof}
	See \cite[Proposition 5.1, Corollary 5.2 and Proposition 5.3]{T2}.
	\end{proof}

\begin{rem}\label{menumal}
	\normalfont
	Note that the obvious map $\pi: \widehat{B}H \rightarrow BG$ is smooth coherent with fiber $G/H$.
\end{rem}

By using the Gysin sequence induced by the Thom isomorphism, one can compute by induction the following motivic cohomology rings.
	
	\begin{thm}\label{bso}
		The motivic cohomology rings of $BO_n$, $BSO_n$ and $BSpin_n$ are respectively given by
		$$H(BO_n) \cong H[u_1,\dots,u_n],$$
		$$H(BSO_n) \cong H[u_2,\dots,u_n],$$
		$$H(BSpin_n) \cong H(BSO_n)/I_{k(n)} \otimes_H H[v_{2^{k(n)}}],$$
		where the {\emph i}th subtle Stiefel-Whitney class $u_i$ is in bidegree $([\frac{i}{2}])[i]$, the class $v_{2^{k(n)}}$ is in bidegree $(2^{k(n)-1})[2^{k(n)}]$, $I_{k(n)}$ is the ideal generated by $\theta_0, \dots ,\theta_{k(n)-1}$ and $k(n)$ depends on $n$ as in the following table.
		\begin{center}
			\begin{tabular}{|c|c|}
				\hline
				\textbf{n} &\textbf{k(n)}\\
				\hline
				8l+1 &4l\\
				\hline
				8l+2 &4l+1\\
				\hline
				8l+3 &4l+2\\
				\hline
				8l+4 &4l+2\\
				\hline
				8l+5 &4l+3\\ 
				\hline
				8l+6 &4l+3\\ 
				\hline
				8l+7 &4l+3\\ 
				\hline
				8l+8 &4l+3\\ 
				\hline
			\end{tabular}
		\end{center}
	\end{thm}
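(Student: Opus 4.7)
The plan is to proceed by induction on $n$, extracting each computation from a Gysin long exact sequence obtained by applying the Thom isomorphism of Proposition~\ref{Thom1} to a chain of subgroups. For a pair $H \hookrightarrow G$ satisfying the hypotheses of Proposition~\ref{BG}, one first identifies $\widehat{B}H = EG/H$ with $BH$ in $\mathcal{H}_s(k)$, so that the smooth coherent projection $\pi:\widehat{B}H \to BG$ of Remark~\ref{menumal} has fiber $G/H$. Whenever the motive of $G/H$ decomposes as $T \oplus T(r)[s-1]$, Proposition~\ref{Thom1} produces a Thom class of bidegree $(r)[s]$ and a Gysin sequence
$$\cdots \to H^{*-s,*'-r}(BG) \to H^{*,*'}(BG) \to H^{*,*'}(BH) \to H^{*-s+1,*'-r}(BG) \to \cdots$$
whose connecting map is multiplication by this Thom class; Proposition~\ref{Thom2} supplies the functoriality needed to identify it with a specific cohomology class on the base.

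For $BO_n$, I would take $H = O_{n-1} \hookrightarrow O_n = G$, whose quotient is the affine quadric $\{x_1^2 + \cdots + x_n^2 = 1\}$. Since $k$ contains $\sqrt{-1}$, this quadric is split and its motive decomposes as $T \oplus T([n/2])[n-1]$, placing the Thom class in the bidegree predicted for $u_n$. Assuming inductively that $H(BO_{n-1}) \cong H[u_1,\dots,u_{n-1}]$, multiplication by $u_n$ is injective, the Gysin sequence splits into short exact sequences, and one obtains $H(BO_n) \cong H[u_1,\dots,u_n]$. The $BSO_n$ case runs in parallel using the pair $SO_n \hookrightarrow O_n$: the quotient $\mu_2 \cong \mathrm{Spec}(k) \sqcup \mathrm{Spec}(k)$ (again using $\sqrt{-1} \in k$) has motive $T \oplus T$, yielding a Thom class in bidegree $(0)[1]$ which is identified with $u_1$; the Gysin sequence then kills $u_1$ and produces $H(BSO_n) \cong H[u_2,\dots,u_n]$.

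The $BSpin_n$ computation is the most delicate and is where the hardest part of the argument lies. The central extension $1 \to \mu_2 \to Spin_n \to SO_n \to 1$ induces a map $BSpin_n \to BSO_n$ that one analyzes through a similar fiber-sequence technique. The class $u_2$ measures the obstruction to lifting an $SO_n$-torsor to a $Spin_n$-torsor, so $u_2$ vanishes in $H(BSpin_n)$, and by naturality of the motivic Steenrod operations every $\theta_j = Sq^{2^{j-1}}\cdots Sq^1 u_2$ vanishes as well. The core of the argument is then to show that $\theta_0, \dots, \theta_{k(n)-1}$ forms a regular sequence of \emph{maximal} length in $H(BSO_n)$ and that a single additional polynomial generator $v_{2^{k(n)}}$ emerges at the next stage as a Thom class. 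The main obstacle is pinning down the precise value of $k(n)$: it is controlled by the combinatorics of when the iterated Steenrod images of $u_2$ cease to be regular modulo their predecessors, which depends on the $2$-adic expansion of $n$ and accounts for the eight-case table. This numerical analysis, together with the explicit construction of $v_{2^{k(n)}}$, is the substantive content carried out in \cite{T2}, lifting the topological computation of $H_{top}(BSpin_n)$ to the motivic setting through the module structure over the motivic Steenrod algebra.
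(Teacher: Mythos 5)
The paper does not actually prove Theorem~\ref{bso}: its proof consists solely of citations to \cite[Theorem 3.1.1]{SV} and \cite[Proposition 5.6, Theorem 8.3]{T2}. So your sketch is being measured against the arguments in those references, not against anything in this paper, and at that level it captures the right architecture for the first two computations: apply the Thom isomorphism of Proposition~\ref{Thom1} to the stabilizer pair $O_{n-1} \hookrightarrow O_n$ (the split affine quadric $O_n/O_{n-1}$ indeed decomposes as $T \oplus T([n/2])[n-1]$ over $k \ni \sqrt{-1}$, placing the Thom class in the bidegree of $u_n$), and to $SO_n \hookrightarrow O_n$ (where $\mu_2 \cong \mathrm{Spec}(k) \sqcup \mathrm{Spec}(k)$ needs only $\mathrm{char}(k) \neq 2$, not $\sqrt{-1} \in k$; the Thom class lands in $H^{1,0}(BO_n) \cong \Z/2 \cdot u_1$). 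Since $u_n$, resp.~$u_1$, is a non-zero-divisor in the polynomial ring, the Gysin sequences split and the induction closes.

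The $BSpin_n$ step is where your outline glosses over a genuine difficulty. Since $Spin_n$ is a double cover of $SO_n$ and not a subgroup of it, the map $BSpin_n \to BSO_n$ is a $\mu_2$-torsor, not a $\Gm$-torsor, and Proposition~\ref{Thom1} (which needs a fiber $A$ with $M(A) \cong T \oplus T(r)[s-1]$) does not apply to it directly. Producing a Gysin-type sequence in this situation, showing that $\theta_0,\dots,\theta_{k(n)-1}$ is a regular sequence of exactly the right length, pinning down $k(n)$, and constructing $v_{2^{k(n)}}$ (which in \cite{T2} is not simply a Thom class of this fibration) are all nontrivial, and you correctly defer all of it to \cite{T2}. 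Given that the paper itself defers in exactly the same way, your outline is a fair reconstruction, but it should not give the impression that the $Spin$ case follows by the same mechanical Thom-class argument as the other two.
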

	\begin{proof}
	See \cite[Theorem 3.1.1]{SV} and \cite[Proposition 5.6 and Theorem 8.3]{T2}.
	\end{proof}

	\section{Regular sequences in $H(BSO_n)$}
	
   In this section, we want to use the techniques developed in \cite[Section 7]{T2} to produce other regular sequences in the motivic cohomology of $BSO_n$ that will be relevant later to deal with the case of even Clifford groups.
   
    Let $V$ be an $n$-dimensional $\Z/2$-vector space, $B$ a bilinear form over $V$ and ${^{\perp}V}$ its right radical, i.e.
	$${^{\perp}V}=\{y \in V: B(x,y)=0 \: \: for \: \: any \: \: x \in V\}.$$
	
	Fix a basis $\{e_1,\dots,e_n\}$ for $V$ and let $x_i$ and $y_j$ be the coordinates of $x$ and $y$ in $V$, respectively. Then, $B(x,y)=\sum_{i,j=1}^n B(e_i,e_j)x_i y_j$ is a homogeneous polynomial of degree $2$ in $\Z/2[x_1,\dots,x_n,y_1,\dots,y_n]$. 
	
\begin{prop}\label{reg}
		The sequence $B(x,y), B(x,y^2), \dots, B(x,y^{2^{h-1}})$ is a regular sequence in the polynomial ring $\Z/2[x_1,\dots,x_n,y_1,\dots,y_n]$, where $h=n-dim({^{\perp}V})$.
	\end{prop}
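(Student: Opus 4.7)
My plan is to compute the codimension of the common vanishing locus of $B(x,y),B(x,y^2),\dots,B(x,y^{2^{h-1}})$ in affine $2n$-space and invoke Cohen--Macaulayness of the polynomial ring: a sequence of $h$ homogeneous elements forms a regular sequence if and only if the codimension of its common zero locus equals $h$.

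The first step is to write $B(x,y^{2^k})=\sum_{i=1}^n x_i L_i(y)^{2^k}$ with $L_i(y)=\sum_j B(e_i,e_j)y_j$, exploiting that $L_i(y)^{2^k}=\sum_j B(e_i,e_j)y_j^{2^k}$ in characteristic $2$. The linear forms $L_1,\dots,L_n$ span a subspace of dimension exactly $h=\operatorname{rank} M$ in the linear forms in $y$, where $M$ is the Gram matrix of $B$. After reordering so that $L_1,\dots,L_h$ are linearly independent, a linear change of $y$-coordinates sends them to $y_1,\dots,y_h$, and a subsequent linear change of $x$-coordinates absorbs the terms with $i>h$ (whose coefficients, since $L_i$ for $i>h$ is an $\F$-linear combination of $L_1,\dots,L_h$, are simply $\F$-linear combinations of $y_1^{2^k},\dots,y_h^{2^k}$ too). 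This reduces the statement, in the subring $\F[x_1,\dots,x_h,y_1,\dots,y_h]$, to the regularity of $g_k:=\sum_{i=1}^h x_i y_i^{2^k}$ for $k=0,\dots,h-1$; the remaining $2(n-h)$ free variables do not affect regularity, since a sequence in a ring is regular if and only if it is regular after adjoining polynomial variables.

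In this reduced setting, the common zero locus $V\subset \mathbb{A}^{2h}$ coincides with $\{(x,y):A(y)\cdot x=0\}$, where $A(y)=(y_i^{2^k})_{0\le k\le h-1,\,1\le i\le h}$ is the $h\times h$ Moore matrix. By the classical Moore determinant formula in characteristic $2$,
\[
\det A(y)=\prod_{0\ne c\in \F^h}(c_1 y_1+\cdots+c_h y_h),
\]
and more generally $\operatorname{rank}A(y)=\min(h,\dim_{\F}\operatorname{span}(y_1,\dots,y_h))$: a nontrivial $\bar k$-linear dependency among the rows of $A(y)$ amounts to the $y_i$'s being common roots of a nonzero polynomial of the form $\sum_{k=0}^{h-1}\alpha_k z^{2^k}$, whose root set is an $\F$-subspace of $\bar k$ of dimension at most $h-1$. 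Consequently $V_r:=\{y:\operatorname{rank}A(y)\le r\}$ is the union, over the finitely many $h\times r$ matrices $C$ with entries in $\F$, of the images of the linear maps $\bar k^r\to\bar k^h$, $w\mapsto Cw$; each such image has dimension at most $r$, so $\dim V_r\le r$.

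Projecting $V\to \mathbb{A}^h_y$ and stratifying by the rank of $A(y)$ then yields $\dim V\le \max_{0\le r\le h}(\dim V_r+(h-r))\le h$, while the reverse inequality $\dim V\ge h$ is immediate (the open stratum of full rank contributes $\{0\}\times$(dense open) to $V$). Hence $\operatorname{codim}V=h$, and the Cohen--Macaulay property of the polynomial ring delivers the claim. The main technical point I expect is the bound $\dim V_r\le r$, which rests on the special structure of the Moore matrix and the classical theory of additive polynomials in characteristic $2$; all the other steps are either formal or standard linear algebra.
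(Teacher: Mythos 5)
The paper does not supply its own argument here---it simply cites \cite[Corollary 7.3]{T2}---so a direct comparison of approaches is not possible from this source. Evaluated on its own merits, your proof is correct and is the standard geometric argument: reduce to the non-degenerate case by linear coordinate changes, recognize the common zero locus as the incidence variety $\{(x,y): A(y)x=0\}$ of the Moore matrix $A(y)=(y_i^{2^k})$, bound its dimension by stratifying the $y$-space by $\operatorname{rank}A(y)$, and then invoke the Cohen--Macaulay (height $=$ length) criterion for regularity of a homogeneous sequence. The key reduction---that $B(x,y^{2^k})=\sum_i x_i L_i(y)^{2^k}$ with $L_i(y)^{2^k}$ again linear in $y^{2^k}$ because the Gram entries lie in $\F$, and that the $L_i$ span an $h$-dimensional space with $h=\operatorname{rank}M=n-\dim({}^{\perp}V)$---is clean, and passing to the subring in $2h$ variables and back is fine since regularity is preserved under adjoining polynomial indeterminates.

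Two small remarks. First, the one-line justification you give for the Moore rank formula (a single row dependency forces the $y_i$ into the root space of a nonzero additive polynomial, of $\F$-dimension $\le h-1$) only covers the top stratum $r=h-1$; for general $r$ what you actually use is that $\operatorname{rank}A(y)\le r$ forces $\dim_{\F}\operatorname{span}(y_1,\dots,y_h)\le r$. This follows either from the classical Moore rank formula, whose proof is cleanest via columns (if the $y_i$ span a $d$-dimensional $\F$-space, then $h-d$ columns are $\F$-combinations of the others, and some $d\times d$ Moore minor is a nonzero product of linear forms by the determinant formula), or from your row viewpoint plus the fact that the $\bar{\F}_2$-space of additive polynomials of $\sigma$-degree $<h$ annihilating a $d$-dimensional $\F$-subspace has dimension exactly $h-d$. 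Either way it is a classical fact and your appeal to it is legitimate, but the sketch as written does not quite prove it. Second, the algebraically closed field in which you take the zero locus should be an algebraic closure of $\F_2$ (the polynomial ring in the statement is over $\Z/2$), not $\bar k$: the ambient field $k$ of the paper has characteristic $\neq 2$, so $\bar k$ is the wrong symbol here even though the intended meaning is clear.
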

	\begin{proof}
		See \cite[Corollary 7.3]{T2}.
	\end{proof}

Recall from \cite[Section 7]{T2} that there are commutative squares

	$$
\xymatrix{
	H(BO_{2m}) \ar@{->}[r]^(0.47){\alpha_{2m}} \ar@{->}[d]_{\gamma_{2m}} & H(BO_2)^{\otimes m} \ar@{->}[d]^{\delta_{2m}}\\
	S_{2m} \ar@{->}[r]_{\beta_{2m}} & R_{2m}
}
\hspace{1,5cm}
\xymatrix{
	H(BO_{2m+1}) \ar@{->}[r]^(0.38){\alpha_{2m+1}} \ar@{->}[d]_{\gamma_{2m+1}} & H(BO_2)^{\otimes m} \otimes H(BO_1) \ar@{->}[d]^{\delta_{2m+1}}\\
	S_{2m+1} \ar@{->}[r]_{\beta_{2m+1}} & R_{2m+1}
}
$$

where 
$$H(BO_n) \cong H[u_1, \dots,u_n],$$
$$H(BO_2)^{\otimes m} \cong H[x_1,y_1,\dots,x_m,y_m],$$
$$ H(BO_2)^{\otimes m} \otimes_H H(BO_1) \cong H[x_1,y_1,\dots,x_m,y_m,x_{m+1}],$$
$$S_n=\Z/2[u_1,\dots,u_n],$$
$$R_{2m}=\Z/2[x_1,y_1,\dots,x_m,y_m],$$ $$R_{2m+1}=\Z/2[x_1,y_1,\dots,x_m,y_m,x_{m+1}],$$
	$x_i$ is in bidegree $(0)[1]$ and $y_i$ is in bidegree $(1)[2]$ for any $i$, $\beta_n$ is obtained from $\alpha_n$ by tensoring with $\Z/2$ over $H$, $\gamma_n$ and $\delta_n$ are the reduction homomorphisms along $H \rightarrow \Z/2$. 
	
	In particular the following formulas hold:
	$$\beta_{2m}(u_{2j})=\sigma_j(y_1,\dots,y_m),$$
	$$\beta_{2m}(u_{2j+1})=\sum_{i=1}^m x_i\sigma_j(y_1,\dots,y_{i-1},y_{i+1},\dots,y_m),$$ 
	$$\beta_{2m+1}(u_{2j})=\sigma_j(y_1,\dots,y_m),$$ 
	$$\beta_{2m+1}(u_{2j+1})=\sum_{i=1}^m x_i\sigma_j(y_1,\dots,y_{i-1},y_{i+1},\dots,y_m)+x_{m+1}\sigma_j(y_1,\dots,y_m),$$
		where $\sigma_j$ is the $j$th elementary symmetric polynomial.
	
	\begin{lem}\label{tech}
		Let $f:A=\Z/2[a_1,\dots,a_m] \rightarrow B=\Z/2[b_1,\dots,b_n]$ be a ring homomorphism, where $deg(b_i)=1$ for any $i$ and $f(a_j)$ is a homogeneous polynomial in $B$ of positive degree $\alpha_j$ for any $j$. Moreover, let $r_1,\dots,r_k$ be a sequence of elements of $A$. If $f(r_1),\dots,f(r_k)$ is a regular sequence in $B$, then $r_1,\dots,r_k$ is a regular sequence in $A$.
	\end{lem}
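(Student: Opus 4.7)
The plan is to proceed by induction on $k$, first putting a grading on $A$ so that $f$ becomes a graded morphism. Specifically, I would endow $A$ with the grading $\deg(a_j) = \alpha_j$, which makes $f:A\to B$ a morphism of graded polynomial $\Z/2$-algebras (with $B$ in its standard degree-1 grading). In typical applications the $r_i$ are already homogeneous in this grading; the general case reduces to the homogeneous one via the standard fact that regularity of initial forms implies regularity of the original sequence.

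The base case $k=1$ is immediate: $A$ is a domain, so $r_1$ is regular in $A$ iff $r_1\neq 0$, and this follows from $f(r_1)\neq 0$ (as a regular element of $B$). For the inductive step, suppose that $r_1,\ldots,r_{k-1}$ is already known to be regular in $A$ and consider a hypothetical relation $r_k x = \sum_{j<k} r_j y_j$ with $x,y_j$ homogeneous in $A$. Applying $f$ and invoking the regularity of $f(r_k)$ modulo $(f(r_1),\ldots,f(r_{k-1}))$ in $B$, one obtains $f(x) \in (f(r_1),\ldots,f(r_{k-1}))$ in $B$.

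The decisive step is to transport this membership back from $B$ to $A$ and conclude $x \in (r_1,\ldots,r_{k-1})$. Both $A$ and $B$ are graded Cohen-Macaulay polynomial rings, so regularity of a homogeneous sequence is equivalent to the Hilbert-series identity $H_{R/(s)}(t) = H_R(t)\prod_j(1-t^{d_j})$ where $d_j = \deg r_j$. The general inequality $H_{A/(r)}(t) \geq H_A(t)\prod_j(1-t^{d_j})$ is automatic (it reflects the fact that multiplication by $r_j$ can only reduce the dimension of each graded piece by at most $\dim A_{n-d_j}$), so the work lies in the reverse inequality. This I would establish through a degree-by-degree comparison with $B$, in which the matching identity holds by hypothesis on $f(r_1),\ldots,f(r_k)$.

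The main obstacle is precisely this last transfer, since $f$ is neither assumed injective nor flat, so one cannot directly pull back Koszul acyclicity from $B$ to $A$. The leverage for pushing through the comparison comes from the hypotheses $\deg(b_i)=1$ and $\deg f(a_j)=\alpha_j>0$: $B$ being generated in the minimal possible degree rigidifies the graded structure of $f$, and the positivity of the $\alpha_j$ ensures that $f$ sends the augmentation ideal of $A$ into that of $B$, which is what ultimately allows the degree-wise dimension count to close.
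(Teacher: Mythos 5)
The paper itself gives no argument here (it defers to \cite[Lemma 7.4]{T2}), so the proposal has to be judged on its own merits, and as written it has a genuine gap. You correctly isolate the crux: after applying $f$ to a relation $r_kx=\sum_{j<k}r_jy_j$ and using regularity of $f(r_k)$ modulo $(f(r_1),\dots,f(r_{k-1}))$, one only obtains $f(x)\in(f(r_1),\dots,f(r_{k-1}))$ in $B$, and the task is to deduce $x\in(r_1,\dots,r_{k-1})$ in $A$. But your proposed resolution via Hilbert series is a plan, not a proof. The graded map $A\to B$ is neither injective nor flat, so the induced map $A/(r_1,\dots,r_k)\to B/(f(r_1),\dots,f(r_k))$ need be neither injective nor surjective, and there is simply no general coefficientwise comparison between their Hilbert series that "closes" the inequality $H_{A/(r)}(t)\geq H_A(t)\prod_j(1-t^{d_j})$. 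Invoking ``$B$ generated in degree one'' and ``$\alpha_j>0$'' does not supply a mechanism; it only guarantees that $f$ is a graded map sending $A_+$ into $B_+$. (Your aside on reducing to homogeneous $r_i$ via initial forms also runs the implication in the wrong direction: regularity of $f(r_i)$ does not yield regularity of the initial forms $f(\mathrm{in}\,r_i)$.)

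Here is how the transfer can actually be made. Work in the auxiliary graded polynomial ring $P=\Z/2[a_1,\dots,a_m,b_1,\dots,b_n]$, with $\deg a_j=\alpha_j$ and $\deg b_i=1$, and factor $f$ as $A\hookrightarrow P\xrightarrow{\ g\ }B$ where $g(a_j)=f(a_j)$ and $g(b_i)=b_i$. Then $\ker g=(a_1+f(a_1),\dots,a_m+f(a_m))$ is generated by $m$ homogeneous elements of positive degree cutting the $n$-dimensional domain $B$ out of the $(m+n)$-dimensional Cohen--Macaulay ring $P$, hence is a regular sequence. Concatenating with $f(r_1),\dots,f(r_k)$ gives a homogeneous regular sequence of length $m+k$ in $P$. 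Since $r_i\equiv f(r_i)\pmod{\ker g}$, the ideal it generates is also generated by $a_1+f(a_1),\dots,a_m+f(a_m),r_1,\dots,r_k$; as $P$ is graded Cohen--Macaulay and this is again a system of $m+k$ homogeneous elements of positive degree generating an ideal of height $m+k$, it is a regular sequence, and by permutability of homogeneous regular sequences of positive degree so is $r_1,\dots,r_k,a_1+f(a_1),\dots,a_m+f(a_m)$. In particular $r_1,\dots,r_k$ is regular on $P$; and since $P$ is free over $A$ (with $A$ as a direct summand, and $P/(r_1,\dots,r_i)P$ free over $A/(r_1,\dots,r_i)$ for each $i$), it is regular on $A$. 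This replaces your missing transfer step; the induction and the base case you wrote are otherwise fine but unnecessary once this is in place.
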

	\begin{proof}
	See \cite[Lemma 7.4]{T2}.
	\end{proof}
	
	\begin{thm}\label{seq}
		The sequence $\gamma_n(u_1),\gamma_n(u_3),\gamma_n(Sq^2u_3),\dots,\gamma_n(Sq^{2^{l(n)-2}} Sq^{2^{l(n)-3}}\cdots Sq^2u_3)$ is regular in $S_n$, where $l(n)=[\frac{n+1} {2}]$. 
	\end{thm}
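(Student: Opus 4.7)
My plan is to follow the strategy of \cite[Section 7]{T2}. Since $\beta_n \circ \gamma_n = \delta_n \circ \alpha_n$, and by Lemma \ref{tech} applied to the homomorphism $\beta_n : S_n \to R_n$, it suffices to show that the images
$$\delta_n\alpha_n(u_1),\ \delta_n\alpha_n(u_3),\ \delta_n\alpha_n(Sq^2u_3),\ \ldots,\ \delta_n\alpha_n(Sq^{2^{l(n)-2}}\cdots Sq^2u_3)$$
form a regular sequence in $R_n$. This transfers the problem from $H$-algebras to pure $\Z/2$-polynomial rings, where Proposition \ref{reg} can be applied directly.

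I would then compute these images explicitly. Using the formulas for $\beta_n$ recalled above, one has $\delta_n\alpha_n(u_1) = \sum_i x_i$ and $\delta_n\alpha_n(u_3) = \sum_{i\neq j} x_i y_j$ (plus the term $x_{m+1}(y_1+\cdots+y_m)$ when $n = 2m+1$), which is precisely the bilinear form $B(x,y)$ of Proposition \ref{reg} associated with an explicit Gram matrix on a $\Z/2$-vector space of dimension $[n/2]$ (or one more in the odd case). Since $\alpha_n$ arises from a geometric pullback, it commutes with the Steenrod action, so the higher entries may be computed as iterated Steenrod squares of $B(x,y)$. Using the motivic Cartan formula together with $Sq^2 y_j = y_j^2$ (total squaring on a class in bidegree $(1)[2]$) and $Sq^1 y_j = x_j y_j$, $Sq^1 x_j = x_j^2$, one propagates these iterates inductively.

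Reducing modulo $\delta_n\alpha_n(u_1) = \sum_i x_i$, the form $B(x,y)$ simplifies drastically: a direct calculation shows that $B(x,y) \equiv \sum_i x_i y_i$ in $R_n/(\sum_i x_i)$ in both parities of $n$, which is the identity bilinear form $B'$ on $V = \Z/2^{[n/2]}$. One then argues inductively that, modulo the ideal generated by the previous elements of the sequence, $\delta_n\alpha_n(Sq^{2^{k-1}}\cdots Sq^2 u_3) \equiv B'(x,y^{2^{k-1}}) = \sum_i x_i y_i^{2^{k-1}}$. Since $B'$ has trivial right radical, Proposition \ref{reg} gives a regular sequence of length $h = [n/2]$, and one checks $l(n) - 1 \leq [n/2]$ in both cases ($l(n)-1 = m-1$ for $n=2m$ and $l(n)-1=m$ for $n=2m+1$, against $[n/2] = m$).

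The main obstacle will be the inductive step: the Cartan formula produces error terms of the shape $x_i^2 \cdot Sq^1 y_j = x_i^2 x_j y_j$, together with contributions involving $Sq^2 x_j$, and one has to show carefully that, after reducing modulo $\sum_i x_i$, these errors are absorbed into the ideal generated by the previous entries of the sequence. Verifying this absorption is a bookkeeping argument, relying on the explicit description of the Steenrod action on $H(BO_2)$ together with the relations coming from $\sum x_i = 0$ and the lower-order bilinear forms.
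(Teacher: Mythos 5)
Your overall strategy — reduce via Lemma \ref{tech} to the regularity of the images in $R_n$, identify the reduction of $\beta_n\gamma_n(u_3)$ as a bilinear form, and invoke Proposition \ref{reg} — is exactly the paper's. However, there are two genuine problems.

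First, the step you flag as "the main obstacle" is in fact where the proof hinges, and your proposed resolution does not work. The motivic Cartan formula over a field containing $\sqrt{-1}$ produces the mixed terms with a coefficient of $\tau$: for instance $Sq^2(x_iy_j) = x_iy_j^2 + \tau\,Sq^1(x_i)Sq^1(y_j)$ (and $Sq^2x_i = 0$ since $x_i$ has cohomological degree $1$). The crucial observation — which you have not made — is that $\delta_n$ is reduction modulo the augmentation ideal of $H$, so $\tau$ is sent to $0$ in $R_n$. All of your "error terms" are multiples of $\tau$ and therefore vanish on the nose, giving cleanly $\delta_n\alpha_n(Sq^{2^{l}}\cdots Sq^2u_3)=\sum_{i\neq j}x_iy_j^{2^{l}}$ (plus the $x_{m+1}$ contribution in the odd case). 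There is no absorption into the ideal to perform, and your proposal that such an absorption "is a bookkeeping argument" is not justified — indeed it is not clear it would even be true. Without noticing that $\tau=0$ in $R_n$, your argument does not close.

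Second, your claim that modulo $\sum_i x_i$ the form becomes $\sum_i x_iy_i$ "with trivial right radical" giving $h=[n/2]$ is not correct for $n=2m$ even. After imposing $\sum_{i=1}^m x_i=0$, there are only $m-1$ free $x$-variables, and eliminating $x_m$ rewrites the form as $\sum_{i=1}^{m-1}x_i(y_i+y_m)$, a bilinear form on $V=\mathbb{Z}/2^m$ whose right radical is $\langle(1,\dots,1)\rangle$, of dimension $1$. Hence $h=m-1=l(2m)-1$; Proposition \ref{reg} gives a regular sequence of length exactly what is needed, not one of length $m$ as you state. (In the odd case $n=2m+1$ the radical is indeed trivial and $h=m=l(2m+1)-1$.) The conclusion is unharmed because $l(n)-1\le[n/2]$ is still true, but you should record the correct radical dimensions — particularly since the tight equality $h=l(n)-1$ is exactly what makes the theorem sharp.

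\end{document}
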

	\begin{proof}
		By Lemma \ref{tech} it is enough to show the regularity of the sequence
		$$\beta_n\gamma_n(u_1),\beta_n\gamma_n(u_3),\beta_n\gamma_n(Sq^2u_3),\dots,\beta_n\gamma_n(Sq^{2^{l(n)-2}} Sq^{2^{l(n)-3}}\cdots Sq^2u_3)$$
		in $R_n$.
		
		First, consider the case $n=2m$. Then, $\beta_{2m}\gamma_{2m}(u_1)=\beta_{2m}(u_1)=\sum_{i=1}^m x_i$. Moreover, since $\tau$ is zero in $R_{2m}$, we have that $\beta_{2m}\gamma_{2m}(u_3)=\sum_{i\neq j=1}^m x_iy_j$ and 
		\begin{align*}
			\beta_{2m}\gamma_{2m}(Sq^{2^l}\cdots Sq^2u_3)&=\delta_{2m}\alpha_{2m}(Sq^{2^l}\cdots Sq^2u_3)=\delta_{2m}(Sq^{2^l}\cdots Sq^2\alpha_{2m}(u_3))\\
			&=\sum_{i \neq j=1}^m \delta_{2m}(Sq^{2^l}\cdots Sq^2(x_iy_j))=\sum_{i\neq j=1}^m x_iy_j^{2^l}
		\end{align*}
		for $l \geq 1$. Modulo $\beta_{2m}\gamma_{2m}(u_1)$, $\beta_{2m}\gamma_{2m}(u_3)=B(x,y)=\sum_{i=1}^{m-1} x_i(y_i+y_m)$ is a bilinear form over an $m$-dimensional $\Z/2$-vector space $V$ and $\beta_{2m}\gamma_{2m}(Sq^{2^l}\cdots Sq^2u_3)=B(x,y^{2^l})$ for any $l \geq 1$.
		
		From $y_i+y_m=B(e_i,y)$ for any $i \leq m-1$, it follows that ${^{\perp}V} \cong \langle(1,\dots,1)\rangle$ and Proposition \ref{reg} implies that the sequence
		$$\beta_{2m}\gamma_{2m}(u_1),\beta_{2m}\gamma_{2m}(u_3),\beta_{2m}\gamma_{2m}(Sq^2u_3),\dots,\beta_{2m}\gamma_{2m}(Sq^{2^{l(2m)-2}} Sq^{2^{l(2m)-3}}\cdots Sq^2u_3)$$ 
		is regular in $R_{2m}$ where $l(2m)=m=[\tfrac{2m+1}{2}]$.
		
		Now, consider the case $n=2m+1$. Then, $\beta_{2m+1}\gamma_{2m+1}(u_1)=\beta_{2m+1}(u_1)=\sum_{i=1}^{m+1} x_i$. Moreover, since $\tau$ is zero in $R_{2m+1}$, we have that $\beta_{2m+1}\gamma_{2m+1}(u_3)=\sum_{i\neq j=1}^m x_iy_j+x_{m+1}\sum_{j=1}^m y_j$ and 
		\begin{align*}
			\beta_{2m+1}\gamma_{2m+1}(Sq^{2^l}\cdots Sq^2u_3)&=\delta_{2m+1}\alpha_{2m+1}(Sq^{2^l}\cdots Sq^2u_3)=\delta_{2m+1}(Sq^{2^l}\cdots Sq^2\alpha_{2m+1}(u_3))\\
			&=\sum_{i\neq j=1}^m \delta_{2m+1}(Sq^{2^l}\cdots Sq^2(x_iy_j))+\sum_{j=1}^m \delta_{2m+1}(Sq^{2^l}\cdots Sq^2(x_{m+1}y_j))\\
			&=\sum_{i \neq j =1}^m x_iy_j^{2^l} +\sum_{j =1}^m x_{m+1}y_j^{2^l}
		\end{align*}
			for $l \geq 1$. Modulo $\beta_{2m+1}\gamma_{2m+1}(u_1)$, $\beta_{2m+1}\gamma_{2m+1}(u_3)=B(x,y)=\sum_{i=1}^{m} x_iy_i$ is a bilinear form over an $m$-dimensional $\Z/2$-vector space $V$ and $\beta_{2m+1}\gamma_{2m+1}(Sq^{2^l}\cdots Sq^1u_2)=B(x,y^{2^l})$ for any $l \geq 1$. In this case ${^{\perp}V} \cong 0$, since $y_i=B(e_i,y)$ for any $i \leq m$, and Proposition \ref{reg} implies that the sequence 
		$$\beta_{2m+1}\gamma_{2m+1}(u_1),\beta_{2m+1}\gamma_{2m+1}(u_3),\beta_{2m+1}\gamma_{2m+1}(Sq^2u_3),\dots,\beta_{2m+1}\gamma_{2m+1}(Sq^{2^{l(2m+1)-2}} Sq^{2^{l(2m+1)-3}}\cdots Sq^2u_3)$$ 
		is regular in $R_{2m+1}$ where $l(2m+1)=m+1=[\frac{2m+2}{2}]$. This completes the proof. 
	\end{proof}

	\begin{cor}\label{tauseq}
		The sequence $\tau,\theta_1,\dots,\theta_{l(n)-1}$ is regular in $H(BSO_n)$, where $l(n)=[\frac{n+1} {2}]$.
	\end{cor}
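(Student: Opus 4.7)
The plan is to split the regular sequence, handling $\tau$ separately, then the $\theta_j$'s modulo $\tau$ via Theorem \ref{seq} and a faithful flatness argument.

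First I would note that $H \cong K^M(k)/2[\tau]$ is a polynomial ring in $\tau$, so $\tau$ is a non-zero-divisor in $H(BSO_n) = H[u_2, \ldots, u_n]$. Reducing modulo $\tau$ gives $H(BSO_n)/\tau \cong K^M(k)/2[u_2, \ldots, u_n]$, so it remains to show that $\theta_1, \ldots, \theta_{l(n)-1}$ is regular in this quotient.

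Theorem \ref{seq} provides the necessary regularity over $\Z/2$: the sequence $\gamma_n(u_1), \gamma_n(u_3), \gamma_n(Sq^2 u_3), \ldots, \gamma_n(Sq^{2^{l(n)-2}} \cdots Sq^2 u_3)$ is regular in $S_n$. Passing from $BO_n$ to $BSO_n$ corresponds to quotienting by the first element $\gamma_n(u_1) = u_1$, so the remaining sequence $\gamma_n(\theta_1), \ldots, \gamma_n(\theta_{l(n)-1})$ is regular in $\Z/2[u_2, \ldots, u_n]$, using the identification $\theta_1 = Sq^1 u_2 = u_3$. Since $K^M(k)/2$ is a $\Z/2$-vector space, $K^M(k)/2[u_2, \ldots, u_n] = K^M(k)/2 \otimes_{\Z/2} \Z/2[u_2, \ldots, u_n]$ is a free (in particular faithfully flat) $\Z/2[u_2, \ldots, u_n]$-module, and any regular sequence in $\Z/2[u_2, \ldots, u_n]$ remains regular after extending scalars.

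The main obstacle is to match the image of $\theta_j$ in $K^M(k)/2[u_2, \ldots, u_n]$ under the quotient by $\tau$ with the element $\gamma_n(\theta_j) \in \Z/2[u_2, \ldots, u_n]$ viewed via the inclusion $\Z/2 \subset K^M(k)/2$. I would resolve this by invoking the motivic Wu formula for subtle Stiefel-Whitney classes: over a field containing $\sqrt{-1}$, where $[-1] \in K^M_1(k)/2$ vanishes, each $Sq^i u_j$ is a $\Z/2[\tau]$-linear combination of monomials in the $u_i$'s with no non-trivial Milnor K-theory coefficients. Iterating the Cartan formula, $\theta_j$ lies in $\Z/2[\tau, u_2, \ldots, u_n] \subset H(BSO_n)$, so $\theta_j \bmod \tau$ lies in $\Z/2[u_2, \ldots, u_n]$ and coincides with $\gamma_n(\theta_j)$, completing the argument.
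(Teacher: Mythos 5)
Your proof is correct and follows essentially the same route as the paper's: both reduce modulo $\tau$ (and $u_1$), use the Wu and Cartan formulas together with $\sqrt{-1}\in k$ to place each $\theta_j$ in $\Z/2[\tau,u_2,\dots,u_n]$, invoke Theorem \ref{seq}, and then transfer regularity back to $H(BSO_n)$ via the free (faithfully flat) extension $\Z/2 \hookrightarrow K^M(k)/2$. You spell out the flatness step a bit more explicitly than the paper's "this clearly implies," but the argument is the same.
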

	\begin{proof}
			Since $\theta_j$ is inductively computed from $\theta_1=u_3$ by using only Wu formula (see \cite[Proposition 5.7]{T2}) and Cartan formula, we know that $\theta_j$ is an element of $\Z/2[\tau,u_2,\dots,u_n]$ for any $j$. The regularity of the sequence in $\Z/2[\tau,u_2,\dots,u_n]$ follows from Theorem \ref{seq} by noticing that, modulo $\tau$ and $u_1$, $\theta_j=\gamma_n(Sq^{2^{j-1}}\cdots Sq^1u_2)$ in $S_n$. This clearly implies also the regularity of the sequence in $H(BSO_n)$.
	\end{proof}
	
	Recall from \cite[Section 7]{T2} the homomorphisms $i:H_{top}(BSO_n) \rightarrow H(BSO_n)$, $h:H_{top}(BSO_n) \rightarrow H(BSO_n)$ and $t:H(BSO_n) \rightarrow H_{top}(BSO_n)$, where $i$ is the ring homomorphism defined by $i(w_i)=u_i$, $h$ is the linear map defined by $h(x)=\tau^{[{\frac{p_{i(x)}} {2}}-q_{i(x)}]}i(x)$ for any monomial $x$, where $(q_{i(x)})[p_{i(x)}]$ is the bidegree of $i(x)$, and $t$ is the ring homomorphism defined by $t(u_i)=w_i$, $t(\tau)=1$ and $t(K_r^M(k)/2)=0$ for any $r>0$.
	
		\begin{lem}\label{h}
		For any homogeneous polynomials $x$ and $y$ in $H_{top}(BSO_n)$, we have that $h(xy)=\tau^{\epsilon}h(x)h(y)$, where $\epsilon$ is $1$ if $p_{i(x)}p_{i(y)}$ is odd and $0$ otherwise.
	\end{lem}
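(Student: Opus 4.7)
The plan is to reduce the identity to the case where $x$ and $y$ are monomials, and then to verify it by a direct bidegree computation that boils down to a one-line arithmetic identity on floor functions. First, I would observe that for a monomial $x = w_{i_1}^{a_1}\cdots w_{i_r}^{a_r}$ in $H_{top}(BSO_n)$, the image $i(x)=u_{i_1}^{a_1}\cdots u_{i_r}^{a_r}$ has second bidegree coordinate $p_{i(x)} = \sum_j a_j i_j$, which is exactly the topological degree of $x$. Consequently, every monomial summand of a topologically homogeneous polynomial has the same $p_{i(\cdot)}$, and hence the same parity. So when $x$ and $y$ are homogeneous, the parity of $p_{i(x)}p_{i(y)}$ is unambiguously determined by $x$ and $y$, and the scalar $\epsilon$ is well defined. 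Since $h$ is linear by definition and multiplication is distributive, this reduces the claim to the case where $x$ and $y$ are monomials.

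Next, for monomial $x, y$, I would use the fact that $i$ is a ring homomorphism to write $i(xy)=i(x)i(y)$, which therefore lies in bidegree $(q_x+q_y)[p_x+p_y]$, where I write $(q_x)[p_x]$ and $(q_y)[p_y]$ for the bidegrees of $i(x)$ and $i(y)$. Since $q_x,q_y$ are integers, the definition of $h$ unpacks as
\[
h(xy) \;=\; \tau^{[(p_x+p_y)/2]-q_x-q_y}\,i(x)i(y), \qquad h(x)h(y) \;=\; \tau^{[p_x/2]+[p_y/2]-q_x-q_y}\,i(x)i(y),
\]
so the whole statement reduces to comparing the exponents of $\tau$, i.e.\ to the identity
\[
\Bigl[\tfrac{p_x+p_y}{2}\Bigr] - \Bigl[\tfrac{p_x}{2}\Bigr] - \Bigl[\tfrac{p_y}{2}\Bigr] \;=\; \epsilon.
\]

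Finally, a one-line case analysis on the parities of $p_x$ and $p_y$ verifies this arithmetic identity: the left-hand side vanishes when at least one of $p_x, p_y$ is even, and equals $1$ when both are odd. Since $p_x p_y$ is odd precisely in the latter case, this matches the prescribed value of $\epsilon$.

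I do not foresee a genuine obstacle here; the argument is essentially bookkeeping. The only points that need mild attention are (i) confirming that $\epsilon$ is a sensible invariant of the pair of homogeneous polynomials (handled by the identification of $p_{i(\cdot)}$ with the topological degree above), and (ii) justifying that the integer $q_x+q_y$ can be pulled out of the floor, which is automatic. Nothing deeper than the floor identity is invoked.
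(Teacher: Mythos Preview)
Your argument is correct. The reduction to monomials via the identification of $p_{i(\cdot)}$ with the topological degree is sound, and the floor identity $[(p_x+p_y)/2]-[p_x/2]-[p_y/2]=\epsilon$ is exactly what is needed once one unwinds the definition of $h$.

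There is nothing to compare against in the paper itself: the paper does not prove this lemma but simply cites \cite[Lemma~7.9]{T2} (i.e.\ the corresponding lemma in the author's earlier work on $Spin$-torsors). Your self-contained bidegree computation is presumably what that reference contains, and in any case it is the natural and essentially unique way to verify the statement.
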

\begin{proof}
	See \cite[Lemma 7.7]{T2}.
	\end{proof}

\begin{lem}\label{th}
	For any $j$, $t(\theta_j)=\rho_j$ and $h(\rho_j)=\theta_j$.
\end{lem}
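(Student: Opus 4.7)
The plan is to prove the two identities in order, with the second deduced from the first. For $t(\theta_j) = \rho_j$, I would invoke the compatibility of $t$ with the Steenrod action: the motivic Wu formula used in the proof of Corollary \ref{tauseq} reduces to the topological Wu formula under the substitutions $\tau \mapsto 1$, $u_i \mapsto w_i$ and by killing Milnor K-theory in positive weight, so the ring homomorphism $t$ satisfies $t(Sq^i x) = Sq^i t(x)$ on any polynomial in $\tau$ and the $u_i$'s. Starting from $t(u_2)=w_2$ and iterating the Steenrod squares in the definition of $\theta_j$ yields $t(\theta_j) = Sq^{2^{j-1}}\cdots Sq^1 w_2 = \rho_j$.

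For $h(\rho_j) = \theta_j$, my strategy is to show that $h\circ t$ fixes $\theta_j$; combined with the first identity this immediately gives $h(\rho_j)=h(t(\theta_j))=\theta_j$. By the observation used in the proof of Corollary \ref{tauseq}, $\theta_j$ lies in $\Z/2[\tau,u_2,\dots,u_n]$, so it suffices to check that each monomial $\tau^a u_{i_1}\cdots u_{i_k}$ occurring in $\theta_j$ is fixed by $h\circ t$. The bidegree of $\theta_j$ is $(2^{j-1})[2^j+1]$, which forces such a monomial to satisfy $\sum_l i_l = 2^j+1$ and $a = 2^{j-1} - \sum_l [i_l/2]$. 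Applying the definitions of $t$ and $h$ in turn,
\[
h(t(\tau^a u_{i_1}\cdots u_{i_k})) = h(w_{i_1}\cdots w_{i_k}) = \tau^{[(2^j+1)/2]-\sum_l [i_l/2]}\, u_{i_1}\cdots u_{i_k} = \tau^{2^{j-1}-\sum_l [i_l/2]}\, u_{i_1}\cdots u_{i_k},
\]
which matches the original monomial. Summing over all monomials occurring in $\theta_j$ gives $h(t(\theta_j))=\theta_j$, completing the proof.

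The main obstacle is justifying the compatibility $t\circ Sq^i = Sq^i\circ t$ used in the first part, since $t$ is defined only by its action on the ring generators. This reduces to the term-by-term comparison of the motivic and topological Wu formulas, which underlies several of the constructions of Section 7 of \cite{T2}; once it is granted, the remainder of the argument is the elementary bidegree bookkeeping above.
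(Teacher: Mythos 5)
Your proof is correct, and the two halves fit together cleanly. The key observations — that $t$ intertwines the motivic and topological Steenrod actions because the motivic Wu and Cartan formulas specialize to their topological counterparts under $\tau\mapsto 1$, $u_i\mapsto w_i$; and that $h\circ t$ is the identity precisely on monomials of bidegree $(q)[p]$ with $q=[p/2]$, which is the bidegree $(2^{j-1})[2^j+1]$ of $\theta_j$ — are exactly the right ingredients. Your bidegree bookkeeping for the second half is accurate: since every monomial $\tau^a u_{i_1}\cdots u_{i_k}$ of $\theta_j$ has its $\tau$-exponent $a$ determined by the $u$-part via the fixed bidegree, $t$ sends distinct monomials of $\theta_j$ to distinct monomials of $\rho_j$, so there is no hidden cancellation, and applying $h$ monomial-by-monomial recovers $\theta_j$. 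The paper itself only cites \cite[Lemma 7.9]{T2} for this lemma, but the argument you give is the natural one and follows the structure set up by the definitions of $i$, $h$, and $t$ in Section 3; the only point worth stating more explicitly, as you flag, is that $t\circ Sq^m=Sq^m\circ t$ need only be verified on the subring $\Z/2[\tau,u_2,\dots,u_n]$, where it follows by induction from the Wu formula on generators and the Cartan formula on products (both of which specialize correctly because $t(\tau)=1$).
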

\begin{proof}
		See \cite[Lemma 7.9]{T2}.
\end{proof}

\begin{dfn}
	Let $I^{\circ}_j$ be the ideal in $H(BSO_n)$ generated by $\theta_1, \dots ,\theta_{j-1}$ and $I^{\circ,top}_j$ the ideal in $H_{top}(BSO_n)$ generated by $\rho_1, \dots ,\rho_{j-1}$.
	\end{dfn}

\begin{thm}\label{Q2}
	The canonical homomorphism 
	$$H_{top}(BSO_n)/I^{\circ,top}_{l(n)} \otimes \Z/2[e(\Delta_n)] \rightarrow H_{top}(BSpin^c_n)$$ 
	is an isomorphism, where $l(n)=[\frac{n+1} {2}]$ and $e(\Delta_n)$ is the Euler class of the complex spin representation $\Delta_n$.
\end{thm}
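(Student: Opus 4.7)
The plan is to analyse the mod $2$ Serre spectral sequence of the fibration
$$BU(1) \to BSpin^c(n) \to BSO(n)$$
induced by the central extension $1 \to U(1) \to Spin^c(n) \to SO(n) \to 1$. With $c$ the polynomial generator of $H_{top}(BU(1)) \cong \F[c]$ in degree $2$, the $E_2$-page reads $H_{top}(BSO_n)[c]$. Since this fibration is classified by the integral third Stiefel-Whitney class $W_3 \colon BSO_n \to K(\Z,3)$, whose mod $2$ reduction equals $Sq^1 w_2 = \rho_1$, the universal transgression of $c$ is $\rho_1$.

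Next, I would combine Kudo's transgression theorem with the instability relation $c^{2^j} = Sq^{2^j}(c^{2^{j-1}})$ (valid since $|c^{2^{j-1}}|=2^j$) to conclude inductively that each $c^{2^j}$ is transgressive with transgression $Sq^{2^j} \cdots Sq^2 \rho_1 = \rho_{j+1}$. The regularity of $\rho_1, \dots, \rho_{l(n)-1}$ in $H_{top}(BSO_n)$, obtained from Theorem \ref{seq} via the ring homomorphism $t$ of Lemma \ref{th}, ensures that these successive transgressions genuinely impose the relations $\rho_j = 0$ for $1 \leq j \leq l(n)-1$ without collapsing prematurely. The surviving class $c^{2^{l(n)-1}}$, of degree $2^{l(n)}$, is then to be identified with $e(\Delta_n)$ through the map $BSpin^c(n) \to BU(2^{l(n)-1})$ induced by the complex half-spin representation $\Delta_n$: the mod $2$ top Chern class of $\Delta_n$ realises its Euler class, and degree considerations force it to generate the surviving column of the spectral sequence.

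The main obstacle is to justify that the spectral sequence terminates precisely at this stage, i.e.\ that the potential Kudo transgression $\rho_{l(n)}$ of $c^{2^{l(n)-1}}$ is in fact a consequence of the earlier relations, so that $\rho_{l(n)} \in (\rho_1,\dots,\rho_{l(n)-1})$. This is a delicate combinatorial statement about iterated Steenrod squares on $w_2$ in $H_{top}(BSO_n)$ whose shape depends on $n \bmod 8$, parallel to the analogous fact for $BSpin_n$ in Theorem \ref{bso} (where the process halts at $k(n)$). One can either verify it directly through Wu's and Cartan's formulae, or simply appeal to \cite[Theorem 3.5]{HK}, of which Theorem \ref{Q2} is essentially a clean reformulation.
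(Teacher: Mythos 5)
The paper's proof of Theorem~\ref{Q2} is, verbatim, the citation ``See \cite[Theorem 3.5]{HK}.''\ --- the result is taken as a black-box input from Harada--Kono, and is precisely what later feeds (via $t$ and $h$ of Lemma~\ref{th}) into the second half of Theorem~\ref{MQ1}. Your sketch reconstructs the argument that presumably lives inside that citation: the mod~$2$ Serre spectral sequence of $BU(1)\to BSpin^c(n)\to BSO(n)$, the identification of the transgression of $c$ with the mod~$2$ reduction of the $k$-invariant $W_3$, namely $\rho_1 = Sq^1w_2$, the inductive use of Kudo's transgression theorem together with the instability relation $c^{2^j}=Sq^{2^j}c^{2^{j-1}}$ to hit $\rho_{j+1}$, and the identification of the permanent cycle $c^{2^{l(n)-1}}$ with $e(\Delta_n)$ via the complex (half-)spin representation. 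This is all consistent with the Harada--Kono strategy, and since you yourself appeal to \cite[Theorem 3.5]{HK} for the crucial combinatorial step $\rho_{l(n)}\in I^{\circ,top}_{l(n)}$, your proposal and the paper ultimately rest on the same citation; yours just supplies the surrounding scaffolding.

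Two points worth tightening. First, the claim that regularity of $\rho_1,\dots,\rho_{l(n)-1}$ in $H_{top}(BSO_n)$ follows from Theorem~\ref{seq} ``via the ring homomorphism $t$'' is imprecise: $t$ is the specialization $\tau\mapsto 1$, whereas Theorem~\ref{seq} is formulated after applying $\gamma_n$, i.e.\ after the opposite specialization $\tau\mapsto 0$, so one does not literally factor through $t$. What is true is that the bilinear-form argument underlying Proposition~\ref{reg}, Lemma~\ref{tech} and Theorem~\ref{seq} transposes directly to $\Z/2[w_2,\dots,w_n]$ with the same formulas for $\rho_j$ (indeed the motivic argument is modeled on the topological one), so the regularity holds; just say that rather than invoking $t$. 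Second, the termination step $\rho_{l(n)}\in(\rho_1,\dots,\rho_{l(n)-1})$ is not a side remark but the whole content distinguishing the $Spin^c$-computation from the $Spin$-computation (it is why $l(n)$ appears instead of $k(n)$), so characterizing it as a ``delicate combinatorial statement'' that one ``can verify directly'' somewhat understates that this is exactly the hard part of \cite[Theorem 3.5]{HK}; leaving it to the citation is fine and matches what the paper does, but a complete blind proof would have to carry it out.
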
 
\begin{proof}
	See \cite[Theorem 3.5]{HK}.
\end{proof}
	
The following is the main result of this section.
	
	\begin{thm}\label{MQ1}
		The sequence $\theta_1,\dots,\theta_{l(n)-1}$ is regular in $H(BSO_n)$ and $\theta_{l(n)} \in I^{\circ}_{l(n)}$, where $l(n)=[\frac{n+1} {2}]$.
	\end{thm}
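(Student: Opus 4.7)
The plan is to split Theorem \ref{MQ1} into its two assertions---(i) the regularity of $\theta_1, \dots, \theta_{l(n)-1}$ in $H(BSO_n)$, and (ii) the inclusion $\theta_{l(n)} \in I^{\circ}_{l(n)}$---and handle them separately.

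For (i), I would start from Corollary \ref{tauseq}, which already provides the regularity of the longer sequence $\tau, \theta_1, \dots, \theta_{l(n)-1}$. To drop the leading $\tau$, I restrict to the polynomial subring $A := \Z/2[\tau, u_2, \dots, u_n] \subset H(BSO_n)$, which contains every $\theta_j$ (as recalled in the proof of Corollary \ref{tauseq}). Since $H \cong K^M(k)/2 \otimes_{\Z/2} \Z/2[\tau]$ is a free $\Z/2[\tau]$-module, $H(BSO_n)$ is a free, hence faithfully flat, $A$-module, and regularity of a sequence of elements of $A$ transfers in both directions. Inside $A$, graded by weight, each member of the sequence is homogeneous of strictly positive weight ($\tau$ has weight $1$, $\theta_j$ has weight $2^{j-1}$), so the standard permutation principle for regular sequences in positively graded polynomial rings over a field allows me to move $\tau$ to the end. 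The initial segment $\theta_1, \dots, \theta_{l(n)-1}$ is then regular in $A$ and hence in $H(BSO_n)$.

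For (ii), I would pass to topology via the ring map $t$. By Lemma \ref{th}, $t(\theta_{l(n)}) = \rho_{l(n)}$, so it suffices to show $\rho_{l(n)} \in I^{\circ,top}_{l(n)}$ and then lift the relation. Theorem \ref{Q2} factors the realization as
$$H_{top}(BSO_n) \twoheadrightarrow H_{top}(BSO_n)/I^{\circ,top}_{l(n)} \hookrightarrow H_{top}(BSpin^c_n),$$
so the kernel of the map $H_{top}(BSO_n) \to H_{top}(BSpin^c_n)$ is precisely $I^{\circ,top}_{l(n)}$. On the other hand, $w_2$ pulls back to $H_{top}(BSpin^c_n)$ as the mod-$2$ reduction of an integral class (the first Chern class of the complex line bundle attached to the $U(1)$-factor of $Spin^c(n)$), so $Sq^1 w_2$, and hence every iterated composition $Sq^{2^{j-1}} \cdots Sq^2 Sq^1 w_2 = \rho_j$ for $j \geq 1$, vanishes there. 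Thus $\rho_{l(n)}$ lies in the kernel, i.e., in $I^{\circ,top}_{l(n)}$, and one may write $\rho_{l(n)} = \sum_{i=1}^{l(n)-1} a_i \rho_i$ with each $a_i$ a homogeneous polynomial in $w_2, \dots, w_n$.

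Applying the $\Z/2$-linear lift $h$ and invoking Lemmas \ref{h} and \ref{th} yields
$$\theta_{l(n)} = h(\rho_{l(n)}) = \sum_{i=1}^{l(n)-1} \tau^{\epsilon_i} h(a_i) \theta_i \in I^{\circ}_{l(n)}.$$
The main obstacle is the identification $\rho_{l(n)} \in I^{\circ,top}_{l(n)}$, which encapsulates the topological input from Harada--Kono; once this is in place, the passage back to the motivic ring via $h$ is routine thanks to the multiplicativity-up-to-$\tau$ formula of Lemma \ref{h}.
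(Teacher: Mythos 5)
Your proof is correct and follows the same route as the paper: deduce the regularity of $\theta_1,\dots,\theta_{l(n)-1}$ from Corollary \ref{tauseq}, and obtain $\theta_{l(n)} \in I^{\circ}_{l(n)}$ by transporting the topological relation $\rho_{l(n)} \in I^{\circ,top}_{l(n)}$ (forced by Theorem \ref{Q2}) through the map $h$ via Lemmas \ref{h} and \ref{th}. You also fill in two steps the paper treats as immediate --- the passage to the Noetherian graded subring $\Z/2[\tau,u_2,\dots,u_n]$ to legitimately permute $\tau$ past the $\theta_j$'s, and the Bockstein argument showing $Sq^1 w_2 = 0$ in $H_{top}(BSpin^c_n)$ --- both of which are accurate and genuinely needed.
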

	\begin{proof}
	Corollary \ref{tauseq} immediately implies that $\theta_1,\dots,\theta_{l(n)-1}$ is a regular sequence in $H(BSO_n)$.
	
	By Theorem \ref{Q2}, we know that $\rho_{l(n)}=Sq^{2^{l(n)-1}}\rho_{l(n)-1}$ vanishes in $H_{top}(BSpin^c_n)$, and so $\rho_{l(n)} \in I^{\circ,top}_{l(n)}$. It follows that $\rho_{l(n)}=\sum_{i=1}^{l(n)-1} \phi_i\rho_i$ for some homogeneous $\phi_i \in H_{top}(BSO_n)$ and, after applying $h$, we obtain that $\theta_{l(n)}=\sum_{i=1}^{l(n)-1} h(\phi_i)\theta_i$ by Lemmas \ref{h} and \ref{th}. Thus, $\theta_{l(n)} \in I^{\circ}_{l(n)}$, which completes the proof. 
	\end{proof}

\begin{rem}\label{kcasi}
	\normalfont
	Note that either $l(n)=k(n)$ or $l(n)=k(n)+1$. If $l(n)=k(n)$, then $\theta_{k(n)} \in I^{\circ}_{k(n)}$. On the other hand, if $l(n)=k(n)+1$, then the sequence $\theta_1,\dots,\theta_{k(n)}$ is regular in $H(BSO_n)$, and so $\theta_{k(n)} \notin I^{\circ}_{k(n)}$.
\end{rem}
	
	\section{The motivic cohomology ring of $B\Gamma^+_n$}

 	 In this last section, we prove our main result that describes the structure of the motivic cohomology of the Nisnevich classifying space of even Clifford groups.
 	 
 	 Before proceeding, recall from \cite[Section 3]{CM} that $\Gamma^+_n$-torsors are in one-to-one correspondence with quadratic forms with trivial discriminant and Clifford invariant, i.e. quadratic forms in $I^3$, where $I$ is the fundamental ideal of the Witt ring. Moreover, for any $n \geq 2$, we have the following short exact sequences of algebraic groups (see \cite[Chapter VI, Section 23.A]{KMRT})
 	 \begin{equation}\label{sesq}
	1 \rightarrow  \Gm \rightarrow \Gamma^+_n \rightarrow SO_n \rightarrow 1, \hspace{35pt} 1 \rightarrow Spin_n \rightarrow \Gamma^+_n \rightarrow \Gm \rightarrow 1.
\end{equation}
 	
 	\begin{lem}\label{ttben}
 		For any $n \geq 2$, $BSpin_n \cong \widehat{B}Spin_n$ with respect to the embedding $Spin_n \hookrightarrow \Gamma^+_n$.
 	\end{lem}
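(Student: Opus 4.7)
The plan is to apply Proposition~\ref{BG} to the embedding $Spin_n \hookrightarrow \Gamma^+_n$ coming from the short exact sequence $1 \to Spin_n \to \Gamma^+_n \to \Gm \to 1$. Since $\Hom_{\mathcal{H}_s(k)}(Spec(K), B_{\acute{e}t}G)$ computes $H^1_{\acute{e}t}(K, G)$, two conditions must be checked: (i) every rationally trivial $Spin_n$- and $\Gamma^+_n$-torsor is Zariski-locally trivial; (ii) for every finitely generated field extension $K/k$, the map $H^1_{\acute{e}t}(K, Spin_n) \to H^1_{\acute{e}t}(K, \Gamma^+_n)$ has trivial kernel.

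For condition (i), the $Spin_n$-case is already recorded in \cite[Section 8]{T2}. For $\Gamma^+_n$, one exploits the other short exact sequence $1 \to \Gm \to \Gamma^+_n \to SO_n \to 1$: since $\Gm$ is special by Hilbert~90 and $SO_n$ satisfies the required property, a $\Gamma^+_n$-torsor over $Spec(K)$ trivialized by a rational point descends to a rationally trivial $SO_n$-torsor, and the $\Gm$-fibers of the descent are Zariski-locally trivial, so the original torsor is too.

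For condition (ii), applying étale cohomology to the sequence $1 \to Spin_n \to \Gamma^+_n \to \Gm \to 1$ yields an exact sequence of pointed sets
\[
\Gamma^+_n(K) \xrightarrow{N} K^* \xrightarrow{\partial} H^1_{\acute{e}t}(K, Spin_n) \to H^1_{\acute{e}t}(K, \Gamma^+_n),
\]
so that the kernel of the last map equals the cokernel of the Clifford norm $N$. The task therefore reduces to showing that $N$ is surjective on $K$-points. Given $\lambda \in K^*$, since the split quadratic form of rank $n \geq 2$ over $K$ contains a hyperbolic plane it represents every nonzero scalar, so there exist anisotropic vectors $v_1, v_2$ in the underlying quadratic space with $q(v_1) = 1$ and $q(v_2) = \lambda$. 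A direct computation with the canonical anti-involution of the Clifford algebra shows that $g = v_1 v_2$ lies in $\Gamma^+_n(K)$ and satisfies $N(g) = q(v_1)q(v_2) = \lambda$. The only real subtlety is this explicit realization of an arbitrary scalar as a Clifford norm; once it is in hand the lemma follows formally from Proposition~\ref{BG}.
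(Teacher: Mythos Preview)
Your argument is correct and follows the same strategy as the paper: verify the two hypotheses of Proposition~\ref{BG} for the embedding $Spin_n \hookrightarrow \Gamma^+_n$. The paper's proof is much terser: for (i) it simply invokes the Grothendieck--Serre conjecture (\cite{FP}, \cite{P}) directly for both reductive groups $Spin_n$ and $\Gamma^+_n$, without going through the extension by $\Gm$; for (ii) it just asserts that the map on \'etale torsors over fields is surjective with trivial kernel. Your reduction of (i) for $\Gamma^+_n$ to the $SO_n$-case via the central $\Gm$-extension is fine but still implicitly relies on Grothendieck--Serre for $SO_n$, so the direct citation is cleaner; on the other hand, your explicit surjectivity of the Clifford norm via $g=v_1v_2$ supplies the content that the paper leaves to the reader.
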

 \begin{proof}
 	First, note that, by \cite{FP} and \cite{P}, rationally trivial $Spin_n$-torsors and $\Gamma^+_n$-torsors are locally trivial. Moreover, recall from \cite[Section 4.1]{MV} that $Hom_{{\mathcal H}_s(k)}(Spec(K),B_{\acute{e}t}G) \cong H^1_{\acute et}(K,G)$ for any Nisnevich sheaf of groups $G$. Therefore, it follows from \cite[Section 3]{CM} that
 	$$Hom_{{\mathcal H}_s(k)}(Spec(K),B_{\acute{e}t}Spin_n) \rightarrow Hom_{{\mathcal H}_s(k)}(Spec(K),B_{\acute{e}t}\Gamma^+_n)$$
 	is surjective with trivial kernel, for any finitely generated field extension $K$ of $k$. Hence, we can apply Proposition \ref{BG} to the case that $G$ and $H$ are respectively $\Gamma^+_n$ and $Spin_n$, which provides the aimed result. 
 	\end{proof}
 
\begin{prop}\label{spincliff}
	For any $n \geq 2$, there exists a Gysin long exact sequence of $H(B\Gamma^+_n)$-modules
	$$\dots \rightarrow H^{*-1,*'}(BSpin_n) \xrightarrow{h^*} H^{*-2,*'-1}(B\Gamma^+_n) \xrightarrow{\text{$\cdot u_2$}} H^{*,*'}(B\Gamma^+_n) \xrightarrow{g^*} H^{*,*'}(BSpin_n) \rightarrow \dots$$
	such that the homomorphism $H^{2,*'}(BSO_n) \rightarrow H^{2,*'}(B\Gamma^+_n)$, induced by the map $\Gamma^+_n \rightarrow SO_n$ in (\ref{sesq}), is injective.
\end{prop}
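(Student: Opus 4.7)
The strategy is to apply Proposition \ref{Thom1} to the smooth coherent morphism $\pi: \widehat{B}Spin_n \to B\Gamma^+_n$ from Remark \ref{menumal}, whose fiber is $\Gamma^+_n/Spin_n \cong \Gm$ by the second short exact sequence above. I would verify the three hypotheses as follows: over $(B\Gamma^+_n)_0 = Spec(k)$ the map is the projection $Spec(k) \times \Gm \to Spec(k)$; since $\Gamma^+_n$ is connected as an extension of connected groups, every simplicial level of $B\Gamma^+_n$ is connected, so $CC(B\Gamma^+_n)$ reduces to a point and $H^1(CC(B\Gamma^+_n), R^{\times}) = 0$; and $M(\Gm) \cong T \oplus T(1)[1]$. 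The resulting Thom isomorphism $H^{*-2,*'-1}(B\Gamma^+_n) \cong H^{*,*'}(Cone(\pi))$, combined with the long exact sequence attached to the cofiber sequence $\widehat{B}Spin_n \to B\Gamma^+_n \to Cone(\pi)$ and the identification $\widehat{B}Spin_n \cong BSpin_n$ of Lemma \ref{ttben}, will produce the claimed six-term exact sequence. The middle map will be multiplication by the image $e \in H^{2,1}(B\Gamma^+_n)$ of the Thom class under $H(Cone(\pi)) \to H(B\Gamma^+_n)$.

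The identification $e = u_2$ is expected to be the main obstacle. I would rely on the functoriality of the Thom class (Proposition \ref{Thom2}) applied to the cartesian square
$$
\xymatrix{
\widehat{B}Spin_n \ar@{->}[r] \ar@{->}[d]_{\pi} & E\Gm \ar@{->}[d]^{\pi_0} \\
B\Gamma^+_n \ar@{->}[r]_{f} & B\Gm
}
$$
arising from the spinor-norm character $f: \Gamma^+_n \twoheadrightarrow \Gm$ and the trivial map $Spin_n \to 1$; both vertical arrows are smooth coherent with fiber $\Gm$ and meet the hypotheses of Proposition \ref{Thom1}. Since the Thom class of $\pi_0: E\Gm \to B\Gm$ is the canonical generator $c$ of $H^{2,1}(B\Gm)$, Proposition \ref{Thom2} gives $e = f^*(c)$. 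A direct comparison of this pullback with the explicit expression for $u_2$ coming from the hyperbolic decomposition maps $\alpha_n, \gamma_n$ recalled at the start of Section 3 should then identify $f^*(c)$ with the pullback of $u_2 \in H(BSO_n)$ along $B\Gamma^+_n \to BSO_n$.

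For the injectivity of $H^{2,*'}(BSO_n) \to H^{2,*'}(B\Gamma^+_n)$, Theorem \ref{bso} gives the decomposition $H^{2,q}(BSO_n) = H^{2,q} \oplus H^{0,q-1} \cdot u_2$. Given $a + b u_2$ in the kernel, I would apply $g^*$; since $u_2 = \theta_0$ lies in $I_{k(n)}$ and hence vanishes in $H(BSpin_n)$, this forces $a = 0$. Then by exactness, $b u_2 = 0$ in $H^{2,q}(B\Gamma^+_n)$ places $b$ in the image of $h^*: H^{1,q}(BSpin_n) \to H^{0,q-1}(B\Gamma^+_n)$. Since $BSpin_n$ has no generators in topological degree one, $H^{1,q}(BSpin_n)$ coincides with the base-field piece $H^{1,q}$; its elements come from the structure map $B\Gamma^+_n \to Spec(k)$ and hence lift through $H^{1,q}(B\Gamma^+_n) \to H^{1,q}(BSpin_n)$. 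Exactness of the cofiber long exact sequence then makes $h^*$ vanish on all such classes, yielding $b = 0$ and completing the proof.
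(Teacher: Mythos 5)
Your construction of the Gysin sequence itself coincides with the paper's: you invoke Proposition \ref{Thom1}, Remark \ref{menumal}, and Lemma \ref{ttben} for the coherent map $\widehat{B}Spin_n \rightarrow B\Gamma^+_n$ with fiber $\Gm$, exactly as the paper does. The point of divergence, and the place where the proof has a genuine gap, is the identification of the multiplication class with $u_2$.

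Your idea of using Proposition \ref{Thom2} on the cartesian square over $E\Gm \rightarrow B\Gm$ correctly yields that the multiplication class $e$ equals $f^*(c)$, where $c$ generates $H^{2,1}(B\Gm)$ and $f\colon B\Gamma^+_n \to B\Gm$ comes from the character. But you then assert that a ``direct comparison'' with the hyperbolic decomposition maps $\alpha_n, \gamma_n$ identifies $f^*(c)$ with the pullback of $u_2$, and this step does not work as described. The maps $\alpha_n, \gamma_n$ compare $H(BO_n)$ with $H(BO_2)^{\otimes m}$ by restriction along $O_2^m \hookrightarrow O_n$; they never touch $B\Gm$ or the character $\Gamma^+_n \twoheadrightarrow \Gm$, so they give no handle on $f^*(c)$. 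In fact, once the Gysin sequence is known and one checks $H^{2,1}(BSpin_n) = 0$ (for $n \geq 3$) and $H^{1,1}(B\Gamma^+_n) \twoheadrightarrow H^{1,1}(BSpin_n)$, the group $H^{2,1}(B\Gamma^+_n)$ is one-dimensional generated by $e$; so ``$f^*(c) = u_2$'' is \emph{equivalent} to ``$u_2 \neq 0$ in $H(B\Gamma^+_n)$'', and you have given no independent argument for the latter. Deriving injectivity of $H^{2,*'}(BSO_n) \rightarrow H^{2,*'}(B\Gamma^+_n)$ from $e = u_2$, as your last paragraph does, therefore rests on an unproved premise.

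The paper closes exactly this gap by a separate ingredient that your proof is missing: it first proves the injectivity statement (hence $u_2 \neq 0$) by looking at the \emph{other} fibration $B\Gm \rightarrow B\Gamma^+_n \rightarrow BSO_n$ and applying the motivic Serre spectral sequence of \cite[Theorem 5.12]{T3}, which shows that $H^{1,*'}(N) \cong 0$ for $N = Cone(M(B\Gamma^+_n \to BSO_n) \to T)[-1]$, because $\widetilde{H}(B\Gm)$ is concentrated in topological degrees $\geq 2$. Only afterwards does the paper set up the Gysin sequence for $\widehat{B}Spin_n \to B\Gamma^+_n$ and conclude $e = u_2$ by observing that $g^*$ is an isomorphism in bidegree $(1)[1]$, so that $f^*(1)$ is the unique non-trivial class in $H^{2,1}(B\Gamma^+_n)$ killed by $g^*$, and $u_2$ is another such class. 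To fix your argument you would need to supply a genuine proof that the Chern class of the line bundle associated to $f$ agrees with the image of $u_2$ (or, equivalently, that $u_2$ pulls back non-trivially); the Serre spectral sequence route the paper takes is the natural way to do this, since it uses the extension $1 \to \Gm \to \Gamma^+_n \to SO_n \to 1$ directly.
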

\begin{proof}
	Let $M$ be $M(B\Gamma^+_n \rightarrow BSO_n)$ and $N$ be $Cone(M \rightarrow T)[-1]$ in ${\mathcal {DM}}^{-}_{eff}(BSO_n)$. From the motivic Serre spectral sequence (see \cite[Theorem 5.12]{T3}) associated to the sequence
	\begin{equation}\label{serre}
B\Gm \rightarrow B\Gamma^+_n \rightarrow BSO_n
	\end{equation}
	it follows that $H^{1,*'}(N) \cong 0$. Therefore, the homomorphism $H^{2,*'}(BSO_n) \rightarrow H^{2,*'}(B\Gamma^+_n) $ is injective. In particular, $u_2$ is non-trivial in $H^{2,1}(B\Gamma^+_n)$. 
	
	Now, consider the sequence
	$$\Gm \rightarrow BSpin_n \rightarrow B\Gamma^+_n.$$
	By Proposition \ref{Thom1}, Remark \ref{menumal} and Lemma \ref{ttben}, it induces a Gysin long exact sequence of $H(B\Gamma^+_n)$-modules
	$$\dots \rightarrow H^{p-1,q}(BSpin_n) \xrightarrow{h^*} H^{p-2,q-1}(B\Gamma^+_n) \xrightarrow{f^*} H^{p,q}(B\Gamma^+_n) \xrightarrow{g^*} H^{p,q}(BSpin_n) \xrightarrow{} \dots.$$
    Since $g^*$ is an isomorphism in bidegree $(1)[1]$, we have that $f^*(1)$ is the only non-trivial class in $H^{2,1}(B\Gamma^+_n)$ that vanishes in $H^{2,1}(BSpin_n)$. It follows that $f^*(1)=u_2$, which completes the proof.
	\end{proof}

\begin{lem}\label{surjme}
	For any $n \geq 2$, $H^{p,*'}(BSO_n) \rightarrow H^{p,*'}(B\Gamma^+_n)$ is surjective for $p < 2^{k(n)}$.
\end{lem}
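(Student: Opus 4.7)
The plan is to proceed by induction on the cohomological degree $p$, using the Gysin long exact sequence from Proposition \ref{spincliff} together with the explicit description of $H(BSpin_n)$ in Theorem \ref{bso}.

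The key preliminary observation is that since $v_{2^{k(n)}}$ lives in bidegree $(2^{k(n)-1})[2^{k(n)}]$, for any $p < 2^{k(n)}$ this Euler class contributes nothing, so $H^{p,*'}(BSpin_n) \cong (H(BSO_n)/I_{k(n)})^{p,*'}$. Consequently, the composition $BSpin_n \to B\Gamma^+_n \to BSO_n$ gives that the induced map $H^{p,*'}(BSO_n) \to H^{p,*'}(BSpin_n)$ factors through $H^{p,*'}(B\Gamma^+_n)$ and is surjective in this range of $p$.

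For the induction, the base case $p=0,1$ is immediate, since in these low degrees all three cohomology groups agree with the motivic cohomology of the base. For the inductive step, let $x \in H^{p,q}(B\Gamma^+_n)$ with $p < 2^{k(n)}$, and denote by $\mathrm{res}$ the pullback $H(BSO_n) \to H(B\Gamma^+_n)$ along $B\Gamma^+_n \to BSO_n$. By the preliminary observation, there exists $y \in H^{p,q}(BSO_n)$ whose image in $H^{p,q}(BSpin_n)$ equals $g^*(x)$, so $g^*(x - \mathrm{res}(y)) = 0$. Exactness of the Gysin sequence at this spot yields $z \in H^{p-2,q-1}(B\Gamma^+_n)$ with $x - \mathrm{res}(y) = u_2 \cdot z$. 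Since $p-2 < 2^{k(n)}$ as well, the inductive hypothesis produces $z' \in H^{p-2,q-1}(BSO_n)$ with $\mathrm{res}(z') = z$. Using that $u_2 \in H(B\Gamma^+_n)$ is the restriction of $u_2 \in H(BSO_n)$ (as recorded in the proof of Proposition \ref{spincliff}, where $H^{2,*'}(BSO_n) \to H^{2,*'}(B\Gamma^+_n)$ is injective and hits the image of $f^*(1)$), we obtain $x = \mathrm{res}(y + u_2 z')$, completing the induction.

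The argument is essentially formal, so there is no serious obstacle; the only points requiring care are verifying that the class $u_2$ used in the Gysin sequence really does come from $H(BSO_n)$ (which was precisely the content of Proposition \ref{spincliff}) and ensuring the inductive range $p - 2 < 2^{k(n)}$ is maintained, which follows automatically from $p < 2^{k(n)}$.
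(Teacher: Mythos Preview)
Your argument is correct and follows essentially the same route as the paper: induction on $p$, using the surjectivity of $H^{p,*'}(BSO_n)\to H^{p,*'}(BSpin_n)$ for $p<2^{k(n)}$ from Theorem \ref{bso}, then the Gysin sequence of Proposition \ref{spincliff} to peel off a $u_2$-multiple handled by the induction hypothesis. The only cosmetic difference is the base case---the paper invokes the Serre spectral sequence of (\ref{serre}) to get $H^{0,*'}(BSO_n)\cong H^{0,*'}(B\Gamma^+_n)$, whereas you assert the low-degree identification directly; either way the induction starts cleanly from $p\le 0$.
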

\begin{proof}
	We proceed by induction on $p$. For $p=0$, the Serre spectral sequence associated to \ref{serre} implies that $H^{0,*'}(BSO_n) \cong H^{0,*'}(B\Gamma^+_n)$, which provides the induction basis. Now, suppose that $H(BSO_n) \rightarrow H(B\Gamma^+_n)$ is surjective in topological degrees less than $p < 2^{k(n)}$, and consider a class $x$ in $H^{p,*'}(B\Gamma^+_n)$. Since by Theorem \ref{bso} the homomorphism $H^{p,*'}(BSO_n) \rightarrow H^{p,*'}(BSpin_n)$ that factors through $H^{p,*'}(B\Gamma^+_n)$ is surjective for $p < 2^{k(n)}$, we have that $g^*(x)=g^*(y)$ for some $y$ in the image of $H^{p,*'}(BSO_n) \rightarrow H^{p,*'}(B\Gamma^+_n)$. Hence, by Proposition \ref{spincliff}, there exists a class $z$ in $H^{p-2,*'-1}(B\Gamma^+_n)$ such that $x=y+u_2z$. By induction hypothesis, $z$ is in the image of $H^{p-2,*'-1}(BSO_n) \rightarrow H^{p-2,*'-1}(B\Gamma^+_n)$, from which it follows that $x$ is in the image of $H^{p,*'}(BSO_n) \rightarrow H^{p,*'}(B\Gamma^+_n)$ that is what we wanted to show.
	\end{proof}

\begin{dfn}
	Denote by $\omega_n$ the class $h^*(v_{2^{k(n)}})$ in $H^{2^{k(n)}-1,2^{k(n)-1}-1}(B\Gamma^+_n)$.
\end{dfn}

\begin{rem}\label{doppiovu}
	\normalfont
	  It follows from Lemma \ref{surjme} that $\omega_n$ belongs to the image of $H(BSO_n) \rightarrow H(B\Gamma^+_n)$. Moreover, Proposition \ref{spincliff} implies that $u_2\omega_n=0$ in $H(B\Gamma^+_n)$.
\end{rem}

\begin{prop}\label{n2}
The motivic cohomology ring of $B\Gamma^+_2$ is given by
$$H(B\Gamma^+_2) \cong H[u_2,e_2],$$
where $e_2$ is a lift of $v_2$ in $H(BSpin_2)$ under the homomorphism $H(B\Gamma^+_2) \rightarrow H(BSpin_2)$.
\end{prop}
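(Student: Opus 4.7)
The plan is to apply the Gysin long exact sequence from Proposition \ref{spincliff} with $n=2$ and show that the connecting map $h^*$ vanishes, then to identify $H(B\Gamma^+_2)$ as the expected polynomial algebra via a five-lemma induction.

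Since $k(2)=1$, the ideal $I_{k(2)}$ is generated by $\theta_0=u_2$, so Theorem \ref{bso} yields $H(BSpin_2) \cong H[v_2]$ with $v_2 \in H^{2,1}$. The crucial step is then the vanishing of $\omega_2 = h^*(v_2)$, which lies in $H^{1,0}(B\Gamma^+_2)$. By Remark \ref{doppiovu}, $\omega_2$ belongs to the image of $H^{1,0}(BSO_2)$; but $H(BSO_2) \cong H[u_2]$ with $u_2 \in H^{2,1}$, so this group reduces to $H^{1,0}$, which vanishes because $H = K^M(k)/2[\tau]$ satisfies $H^{p,q}=0$ for $p>q$. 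Hence $\omega_2 = 0$, and by exactness $v_2$ admits a lift $e_2 \in H^{2,1}(B\Gamma^+_2)$ with $g^*(e_2)=v_2$, providing the class named in the statement.

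Next I would argue that $g^*$ is surjective: it is a ring homomorphism whose image contains $H$ and $e_2$, hence the whole of $H[v_2]=H(BSpin_2)$. By exactness this forces $h^*=0$, and the Gysin sequence collapses to short exact sequences
$$0 \to H^{p-2,q-1}(B\Gamma^+_2) \xrightarrow{\cdot u_2} H^{p,q}(B\Gamma^+_2) \xrightarrow{g^*} H^{p,q}(BSpin_2) \to 0$$
for every $(p,q)$, with multiplication by $u_2$ injective in particular.

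To conclude, I would define the ring map $\phi: H[u_2,e_2] \to H(B\Gamma^+_2)$ sending each variable to the class of the same name, and compare the above short exact sequence with the algebraic one
$$0 \to H[u_2,e_2]^{p-2,q-1} \xrightarrow{\cdot u_2} H[u_2,e_2]^{p,q} \to H[e_2]^{p,q} \to 0$$
obtained by setting $u_2=0$, identifying the quotient with $H[v_2]^{p,q}$ via $e_2 \mapsto v_2$. A straightforward induction on $p$ via the five lemma shows that $\phi$ is an isomorphism in each bidegree, the base cases $p \leq 1$ being immediate since $H^{p-2,*'}$ vanishes there. The main non-formal step is the vanishing of $h^*(v_2)$, which rests on Remark \ref{doppiovu} together with the vanishing $H^{p,q}=0$ for $p>q$; once this is in hand, the remainder is a routine five-lemma comparison.
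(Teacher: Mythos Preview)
Your proposal is correct and follows essentially the same approach as the paper: both use the Gysin sequence of Proposition \ref{spincliff}, deduce the vanishing of $h^*$ from $H^{1,0}=0$, and then read off the polynomial description from the resulting short exact sequences. Your use of Remark \ref{doppiovu} to reduce $\omega_2=0$ to $H^{1,0}(BSO_2)=0$, and your explicit five-lemma induction, are slightly more detailed than the paper's terse ``multiplication by $u_2$ is injective and the quotient is $H[v_2]$'', but the substance is identical.
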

\begin{proof}
	Consider the Gysin long exact sequence from Proposition \ref{spincliff} 
	$$\dots \rightarrow H^{*-1,*'}(BSpin_2) \xrightarrow{h^*} H^{*-2,*'-1}(B\Gamma^+_2) \xrightarrow{\text{$\cdot u_2$}} H^{*,*'}(B\Gamma^+_2) \xrightarrow{g^*} H^{*,*'}(BSpin_2) \rightarrow \dots.$$
	Since $H(BSpin_2) \cong H[v_2]$, with $v_2$ in bidegree $(1)[2]$, $g^*$ is a ring homomorphism and $H^{1,0}(B\Gamma^+_2) \cong 0$, we have that $h^*$ is zero, the multiplication by $u_2$ is injective in $H(B\Gamma^+_2)$ and the quotient of $H(B\Gamma^+_2)$ modulo the ideal generated by $u_2$ is $H[v_2]$. This concludes the proof.
	\end{proof}
	
	\begin{lem}\label{u3}
		For any $n \geq 3$, $u_3=0$ in $H(B\Gamma^+_n)$. Moreover, there exists a unique element $x_1$ in $H^{2,1}(N)$ that maps to $u_3$ in $H^{3,1}(BSO_n)$.
	\end{lem}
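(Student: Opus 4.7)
The plan is to handle the two claims separately, both times exploiting the Gysin-type long exact sequences that relate the three classifying spaces $BSO_n$, $B\Gamma^+_n$ and $BSpin_n$.

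For the first assertion, I would feed $u_3$ into the Gysin sequence from Proposition \ref{spincliff} in bidegrees around $(3)[1]$, i.e.
$$\dots \rightarrow H^{2,1}(BSpin_n) \xrightarrow{h^*} H^{1,0}(B\Gamma^+_n) \xrightarrow{\cdot u_2} H^{3,1}(B\Gamma^+_n) \xrightarrow{g^*} H^{3,1}(BSpin_n) \rightarrow \dots$$
By Theorem \ref{bso}, $H(BSpin_n) \cong H(BSO_n)/I_{k(n)} \otimes_H H[v_{2^{k(n)}}]$, and for $n \geq 3$ the table gives $k(n) \geq 2$, so the ideal $I_{k(n)}$ contains both $\theta_0 = u_2$ and $\theta_1 = Sq^1 u_2 = u_3$. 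Hence $g^*(u_3) = 0$, and exactness forces $u_3$ to lie in the image of $\cdot u_2$. Now $u_3 = u_2 \cdot z$ for some $z \in H^{1,0}(B\Gamma^+_n)$, and Lemma \ref{surjme} together with $H^{1,0}(BSO_n) = 0$ (since all polynomial generators of $H(BSO_n)$ sit in cohomological degree $\geq 2$ and $H^{1,0}(Spec(k)) = 0$) shows $H^{1,0}(B\Gamma^+_n) = 0$. Thus $z = 0$ and $u_3 = 0$ in $H(B\Gamma^+_n)$.

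For the second assertion, I would use the exact triangle $N \rightarrow M \rightarrow T \rightarrow N[1]$ in ${\mathcal{DM}}^-_{eff}(BSO_n)$ defining $N$. Taking motivic cohomology yields a long exact sequence
$$\dots \rightarrow H^{p-1,q}(N) \rightarrow H^{p,q}(BSO_n) \rightarrow H^{p,q}(B\Gamma^+_n) \rightarrow H^{p,q}(N) \xrightarrow{\delta} H^{p+1,q}(BSO_n) \rightarrow \dots$$
I want to show that the connecting map $\delta: H^{2,1}(N) \rightarrow H^{3,1}(BSO_n)$ hits $u_3$ and is injective. Existence: the composition $H^{3,1}(BSO_n) \rightarrow H^{3,1}(B\Gamma^+_n)$ sends $u_3$ to $u_3 = 0$ by Part~1, so exactness provides some $x_1 \in H^{2,1}(N)$ with $\delta(x_1) = u_3$.

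Uniqueness reduces to showing that the previous map $H^{2,1}(B\Gamma^+_n) \rightarrow H^{2,1}(N)$ is zero, which by exactness is equivalent to surjectivity of $H^{2,1}(BSO_n) \rightarrow H^{2,1}(B\Gamma^+_n)$. A direct inspection identifies $H^{2,1}(BSO_n) = \Z/2 \cdot u_2$, and the Gysin sequence in bidegree $(2)[1]$ shows
$$H^{0,0}(B\Gamma^+_n) \xrightarrow{\cdot u_2} H^{2,1}(B\Gamma^+_n) \xrightarrow{g^*} H^{2,1}(BSpin_n)$$
where $H^{2,1}(BSpin_n) = 0$ (since $u_2 \in I_{k(n)}$) and $u_2$ is nontrivial in $H^{2,1}(B\Gamma^+_n)$ by Proposition \ref{spincliff}. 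Hence $H^{2,1}(B\Gamma^+_n) = \Z/2 \cdot u_2$, the pullback map is an isomorphism, and the desired uniqueness follows. The main technical point is checking that $H^{1,1}(N) = 0$ feeds in correctly to pin down $H^{2,1}(N) \rightarrow H^{3,1}(BSO_n)$ as injective; this vanishing is already recorded in the proof of Proposition \ref{spincliff} via the motivic Serre spectral sequence for $B\Gm \rightarrow B\Gamma^+_n \rightarrow BSO_n$.
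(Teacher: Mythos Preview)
Your proof is correct and follows essentially the same route as the paper: both parts use the Gysin sequence from Proposition~\ref{spincliff} together with the long exact sequence defining $N$, and both hinge on identifying $H^{2,1}(B\Gamma^+_n) \cong \Z/2 \cdot u_2$ and $H^{1,0}(B\Gamma^+_n)=0$. One remark: your final sentence about $H^{1,1}(N)=0$ is misplaced---that vanishing would give injectivity of $H^{2,1}(BSO_n)\to H^{2,1}(B\Gamma^+_n)$, not injectivity of $\delta$; the latter you already obtained (correctly) from surjectivity of the pullback, so the sentence is unnecessary and can be dropped.
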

	\begin{proof}
By Proposition \ref{spincliff} we have a Gysin long exact sequence
$$\dots \rightarrow H^{*-1,*'}(BSpin_n) \xrightarrow{h^*} H^{*-2,*'-1}(B\Gamma^+_n) \xrightarrow{\text{$\cdot u_2$}} H^{*,*'}(B\Gamma^+_n) \xrightarrow{g^*} H^{*,*'}(BSpin_n) \rightarrow \dots.$$
Since $u_3$ is trivial in $H(BSpin_n)$ and $H^{1,0}(B\Gamma^+_n) \cong 0$, we have that $u_3$ is trivial also in $H(B\Gamma^+_n)$. Moreover, note that, for $n \geq 3$, $H^{2,1}(B\Gamma^+_n) \cong \Z/2 \cdot u_2$.

Now, consider the long exact sequence
$$\dots \rightarrow H^{*-1,*'}(B\Gamma^+_n) \rightarrow H^{*-1,*'}(N) \rightarrow H^{*,*'}(BSO_n) \rightarrow H^{*,*'}(B\Gamma^+_n) \rightarrow \dots .$$
The homomorphism $H^{2,1}(BSO_n) \rightarrow H^{2,1}(B\Gamma^+_n)$ is bijective. Since $u_3$ is trivial in $H(B\Gamma^+_n)$, we deduce that $H^{2,1}(N) \rightarrow H^{3,1}(BSO_n)$ is an isomorphism, which finishes the proof.
	\end{proof}

	\begin{dfn}
		For any $j \geq 2$ and $n \geq 3$, let $x_j$ be the class in $H^{2^j,2^{j-1}}(N)$ defined by $x_j=Sq^{2^{j-1}} \cdots Sq^2x_1$ and denote by $\langle x_1, \dots ,x_{j-1}\rangle  $ the $H(BSO_n)$-submodule of $H(N)$ generated by $x_1, \dots ,x_{j-1}$. 
		
	\end{dfn}
	
	\begin{lem}\label{x}
		For any $j \geq 2$ and $n \geq 3$, $x_j \notin \langle x_1, \dots ,x_{j-1}\rangle$.
	\end{lem}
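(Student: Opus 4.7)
My plan is a proof by contradiction, driven by the long exact sequence
$$\ldots \to H^{*-1,*'}(B\Gamma^+_n) \to H^{*-1,*'}(N) \xrightarrow{\partial} H^{*,*'}(BSO_n) \to H^{*,*'}(B\Gamma^+_n) \to \ldots$$
of $H(BSO_n)$-modules that already appeared in the proof of Lemma \ref{u3}. The connecting map $\partial$ commutes with motivic Steenrod squares, since it is the boundary of a distinguished triangle in $\mathcal{DM}^-_{eff}(BSO_n)$ and motivic Steenrod operations are natural. Combining Lemma \ref{u3}, which yields $\partial(x_1)=u_3=\theta_1$, with this commutation, one obtains by induction that
$$\partial(x_i)=\partial(Sq^{2^{i-1}}\cdots Sq^2 x_1)=Sq^{2^{i-1}}\cdots Sq^2\,u_3=\theta_i$$
for every $i\geq 1$.

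Assume now for contradiction that $x_j=\sum_{i=1}^{j-1}a_i x_i$ with $a_i\in H(BSO_n)$. Since $\partial$ is $H(BSO_n)$-linear, applying it to this relation yields $\theta_j=\sum_{i=1}^{j-1}a_i\theta_i$ in $H(BSO_n)$. For $j\leq l(n)-1$, Theorem \ref{MQ1} asserts that $\theta_1,\ldots,\theta_{l(n)-1}$ is a regular sequence in $H(BSO_n)$, which in particular forces $\theta_j\notin(\theta_1,\ldots,\theta_{j-1})$. This contradicts the displayed relation and proves the lemma in the range $2\leq j\leq l(n)-1$.

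The subtle range is $j\geq l(n)$, where Theorem \ref{MQ1} only gives $\theta_{l(n)}\in I^{\circ}_{l(n)}$, so the relation $\theta_j=\sum a_i\theta_i$ is no longer forbidden in $H(BSO_n)$ and the boundary argument alone produces no contradiction. In this range my strategy is to fix any $b_i\in H(BSO_n)$ realising $\theta_j=\sum b_i\theta_i$ and to show that the residual class $x_j-\sum b_i x_i$, which lies in $\ker(\partial)=\Ima(H(B\Gamma^+_n)\to H(N))$, is in fact nonzero. The plan is to do this by topological comparison: via the maps $h$ and $t$ of Lemmas \ref{h} and \ref{th}, I would reduce the statement to its topological counterpart in $H_{top}$, where the analogue of $x_j$ corresponds, modulo $\langle x_1^{top},\ldots,x_{j-1}^{top}\rangle$, to a power of the Euler class $e(\Delta_n)$. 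By Theorem \ref{Q2}, these powers form a polynomial algebra in $H_{top}(BSpin^c_n)$, so they are manifestly independent, and this topological non-relation would then be transported back to the motivic setting. The main technical obstacle will be promoting the compatibility of $t,h$ with $\partial$ from $H(BSO_n)$ to the auxiliary cohomology $H(N)$, which is what makes the final step of the argument delicate.
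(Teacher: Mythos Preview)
Your argument for $2\leq j\leq l(n)-1$ is fine, but the lemma is stated for all $j\geq 2$, and for $j\geq l(n)$ you only offer an outline whose decisive step you yourself flag as unresolved. The maps $t$ and $h$ of Lemmas~\ref{h} and~\ref{th} compare $H(BSO_n)$ with $H_{top}(BSO_n)$; they are not defined on $H(N)$, and there is no evident way to extend them there or to make them interact with $\partial$. Your residual class $x_j-\sum b_ix_i$ lives in $H(N)$, while the Euler class $e(\Delta_n)$ of Theorem~\ref{Q2} lives in $H_{top}(BSpin^c_n)$; bridging the two would itself require a comparison result of roughly the same strength as the lemma you are trying to prove. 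Note also that in the one place this lemma is invoked (the proof of Theorem~\ref{MQ2}) one needs the case $j=k(n)$ \emph{before} knowing whether $k(n)=l(n)-1$ or $k(n)=l(n)$, so the restricted range you actually establish does not suffice for the application.

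The paper's proof bypasses all of this with a one-line comparison to the spin case. The map $BSpin_n\to B\Gamma^+_n$ over $BSO_n$ induces an $H(BSO_n)$-linear homomorphism from $H(N)$ to the cohomology of the analogous object $N'$ attached to $BSpin_n\to BSO_n$; under it, $x_1$ (characterised by $\partial(x_1)=u_3=\theta_1$) goes to the corresponding spin class, and hence by naturality of the Steenrod operations each $x_j$ does as well. A relation $x_j\in\langle x_1,\dots,x_{j-1}\rangle$ would then push forward to the spin side and contradict \cite[Lemma~8.2]{T2}, which was already proved there for all $j$. This works uniformly; your boundary argument, even in the range where it succeeds, is re-deriving through $\partial$ a special case of what that comparison gives directly.
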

	\begin{proof}
		This follows by noticing that $x_j$ maps to the respective class defined for spin groups in \cite[Lemma 8.2]{T2}.
	\end{proof}

\begin{prop}\label{vp}
	Suppose there exists a class $e$ in $H(B\Gamma^+_n)$ such that $g^*(e)$ is a monic homogeneous polynomial $c$ in $v_{2^{k(n)}}$ with coefficients in $H(BSO_n)$, and denote by $p$ the obvious homomorphism $H(BSO_n) \otimes_H H[e] \rightarrow H(B\Gamma^+_n)$.
	
1)	If $\Ima(h^*)=\Ima(p) \cdot \omega_n$, then $\ker(p)=J^{\circ}_{k(n)}+(u_2 \omega_n)$, where $J^{\circ}_{k(n)}$ is $I^{\circ}_{k(n)} \otimes_H H[e]$.
	
2)	If moreover $\ker(h^*)=\Ima(g^*p)$, then there is an isomorphism
	$$H(BSO_n)/(I^{\circ}_{k(n)}+(u_2 \omega_n)) \otimes_H H[e] \rightarrow H(B\Gamma^+_n).$$
\end{prop}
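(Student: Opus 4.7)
The plan is to prove both parts by induction on bidegree, pivoting on the Gysin sequence of Proposition \ref{spincliff} and the polynomial structure $H(BSpin_n) \cong (H(BSO_n)/I_{k(n)})[v_{2^{k(n)}}]$ from Theorem \ref{bso} to peel off the contributions of $u_2$ and $\omega_n$ successively.

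\textbf{Part (1).} The inclusion $J^\circ_{k(n)} + (u_2\omega_n) \subseteq \ker(p)$ is immediate: for $1 \leq j \leq k(n)-1$, Lemma \ref{u3} gives $u_3 = 0$ in $H(B\Gamma^+_n)$, hence $\theta_j = Sq^{2^{j-1}}\cdots Sq^2 u_3 = 0$ by naturality of Steenrod operations, while $u_2 \omega_n = 0$ by Remark \ref{doppiovu}. For the reverse inclusion, pick $x = \sum_i a_i e^i \in \ker(p)$ and apply $g^*$ to get $\sum_i \tilde a_i c^i = 0$ in $(H(BSO_n)/I_{k(n)})[v_{2^{k(n)}}]$. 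Monicity of $c$ in $v_{2^{k(n)}}$ forces $\tilde a_i = 0$ by comparing leading coefficients, so each $a_i \in I_{k(n)} = I^\circ_{k(n)} + (u_2)$. Writing $a_i = u_2 b_i + (\text{element of } I^\circ_{k(n)})$ reduces modulo $J^\circ_{k(n)}$ to $x \equiv u_2 y$ with $y = \sum_i b_i e^i$. Now $u_2 \cdot p(y) = 0$, so the hypothesis $\Ima(h^*) = \Ima(p) \cdot \omega_n$ combined with the exactness $\ker(\cdot u_2) = \Ima(h^*)$ yields $p(y) = p(z\tilde\omega_n)$ for some $z$, where $\tilde\omega_n \in H(BSO_n)$ is a lift of $\omega_n$ provided by Remark \ref{doppiovu}. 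The difference $y - z\tilde\omega_n$ then lies in $\ker(p)$ with strictly smaller bidegree than $x$, so by induction it belongs to $J^\circ_{k(n)} + (u_2\omega_n)$; multiplying by $u_2$ and adding $u_2 z\tilde\omega_n \in (u_2\omega_n)$ places $u_2 y$, and therefore $x$, in $J^\circ_{k(n)} + (u_2\omega_n)$.

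\textbf{Part (2).} Part (1) makes the induced map $\bar p$ injective. Surjectivity is proved by a parallel induction on bidegree: given $t \in H(B\Gamma^+_n)$, the hypothesis $\ker(h^*) = \Ima(g^*p)$ together with the exactness $\ker(h^*) = \Ima(g^*)$ produces $w$ with $g^*(t) = g^*(p(w))$, whence $t - p(w) \in \ker(g^*) = u_2 \cdot H(B\Gamma^+_n)$ and $t = p(w) + u_2 t'$ for some $t'$ of strictly smaller bidegree. The inductive hypothesis places $t' \in \Ima(\bar p)$, hence also $t$. The base case of each induction is automatic: in bidegrees too low to accommodate a $u_2$-correction term, that correction vanishes by degree reasons and the argument terminates.

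\textbf{Main obstacle.} The heart of the matter is the control of $u_2$-torsion in $H(B\Gamma^+_n)$. The hypothesis $\Ima(h^*) = \Ima(p) \cdot \omega_n$ is precisely the mechanism identifying this torsion with $\Ima(p)$-multiples of $\omega_n$, so that the single relation $u_2\omega_n = 0$ is enough to encode it in the target quotient. The actual labour, left to subsequent propositions, is the verification of the two hypotheses $\Ima(h^*) = \Ima(p)\cdot\omega_n$ and $\ker(h^*) = \Ima(g^*p)$ for an appropriately chosen lift $e$ of a power of $v_{2^{k(n)}}$; the present statement is the formal structural consequence.
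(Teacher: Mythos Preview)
Your proof is correct and follows essentially the same approach as the paper's: induction on (square) degree, using the Gysin sequence of Proposition~\ref{spincliff} to peel off a $u_2$-factor, invoking the hypothesis $\Ima(h^*)=\Ima(p)\cdot\omega_n$ to identify $u_2$-torsion, and appealing to monicity of $c$ to force the $H(BSO_n)$-coefficients into $I_{k(n)}$. The paper is slightly more explicit about the induction bases (citing Proposition~\ref{spincliff} for Part~1 and Lemma~\ref{surjme} for Part~2), but your observation that the $u_2$-correction eventually lands in degrees where it must vanish is equivalent.
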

\begin{proof}
We start by proving 1). It immediately follows from Remark \ref{doppiovu} and Lemma \ref{u3} that $J^{\circ}_{k(n)}+(u_2 \omega_n) \subseteq \ker(p)$. We show the opposite inclusion by induction on the topological degree. Proposition \ref{spincliff} provides the induction basis. Now, suppose that $x$ is in $\ker(p)$ and every class in $\ker(p)$ with topological degree less than the topological degree of $x$ belongs to $J^{\circ}_{k(n)}+(u_2 \omega_n)$. We can write $x$ as $\sum_{j=0}^m \phi_je^j$ for some $\phi_j \in H(BSO_n)$. Then, $\sum_{j=0}^m \phi_j c^j=g^*p(x)=0$, and so $\phi_j=0$ in $H(BSpin_n)$ for any $j$ since by hypothesis $c$ is a monic polynomial in $v_{2^{k(n)}}$. Therefore, $\phi_j \in I_{k(n)}= I^{\circ}_{k(n)}+(u_2)$ by Theorem \ref{bso}. Hence, there are $\psi_j \in H(BSO_n)$ such that $\phi_j+u_2\psi_j \in I^{\circ}_{k(n)}$, from which it follows that $x+u_2z \in J^{\circ}_{k(n)}$ where $z=\sum_{j=0}^m \psi_je^j$. Thus, $u_2p(z)=0$ which implies that $p(z) \in \Ima(h^*)=\Ima(p) \cdot \omega_n$, and so there exists an element $y$ in $H(BSO_n) \otimes_H H[e]$ such that $p(z)=p(y\omega_n)$. Therefore, $z+y\omega_n \in J^{\circ}_{k(n)}+(u_2\omega_n)$ by induction hypothesis. It follows that $z \in J^{\circ}_{k(n)}+(\omega_n)$ and $x \in J^{\circ}_{k(n)}+(u_2\omega_n)$.

We now move to $2)$. We prove  by induction on the topological degree that, if $\ker(h^*)=\Ima(g^*p)$, then $\Ima(p)=H(B\Gamma^+_n)$. Lemma \ref{surjme} provides the induction basis. Let $x$ be a class in $H(B\Gamma^+_n)$ and suppose that $p$ is an epimorphism in topological degrees less than the topological degree of $x$. From $g^*(x) \in \ker(h^*)=\Ima(g^*p)$ it follows that there is an element $\chi$ in $H(BSO_n) \otimes_H H[e]$ such that $g^*(x)=g^*p(\chi)$. Therefore, $x+p(\chi)=u_2z$ for some $z \in H(B\Gamma^+_n)$. By induction hypothesis $z=p(\zeta)$ for some element $\zeta \in H(BSO_n) \otimes_H H[e]$, hence $x=p(\chi+u_2\zeta)$ that is what we aimed to show. 
\end{proof}

	\begin{rem}\label{rem2}
	\normalfont
	Since $H(BSpin_n)$ is generated by the powers $v_{2^{k(n)}}^i$ as a $H(BSO_n)$-module, we have that $\Ima(h^*)$ is generated by $h^*(v_{2^{k(n)}}^i)$ as a $H(BSO_n)$-module.
\end{rem} 

\begin{lem}\label{Sqw}
	For any $m \geq 0$, we have $Sq^m\omega_n \in \langle \omega_n \rangle  $, where $\langle \omega_n\rangle  $ is the $H(BSO_n)$-submodule of $H(B\Gamma^+_n)$ generated by $\omega_n$.
\end{lem}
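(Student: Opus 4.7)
The plan is to use naturality of Steenrod operations with respect to the Gysin boundary map $h^*$ of Proposition \ref{spincliff}. Since $h^*$ arises as a connecting homomorphism in a long exact sequence, it commutes with every $Sq^m$, so
$$
Sq^m\omega_n \;=\; h^*(Sq^m v_{2^{k(n)}}).
$$
I would then decompose $Sq^m v_{2^{k(n)}}$ in $H(BSpin_n)\cong H(BSO_n)/I_{k(n)}\otimes_H H[v_{2^{k(n)}}]$ from Theorem \ref{bso} as $\sum_{i\ge 0}\phi_i\,v_{2^{k(n)}}^i$ with $\phi_i\in H(BSO_n)/I_{k(n)}$. Applying $h^*$, the $i=0$ term vanishes since $1\in\mathrm{im}(g^*)=\ker(h^*)$ by Gysin exactness; by the projection formula applied to lifts of the $\phi_i$ through $H(BSO_n)\to H(BSO_n)/I_{k(n)}$, the problem reduces to showing that each $h^*(v_{2^{k(n)}}^i)$ appearing in the expansion lies in $\langle\omega_n\rangle$.

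For $m < 2^{k(n)}$ this is automatic: bidegree inspection shows that the coefficient $\phi_i$ of $v_{2^{k(n)}}^i$ would have negative square degree for $i\ge 2$, so the expansion collapses to $\phi_0 + \phi_1 v_{2^{k(n)}}$ and hence $Sq^m\omega_n = \phi_1\omega_n \in \langle\omega_n\rangle$. The vanishing relations $u_2 = u_3 = 0$ in $H(BSpin_n)$, inherited from $I_{k(n)}$, rule out further low-weight contributions to $\phi_1$ when applicable.

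For $m\ge 2^{k(n)}$, higher powers of $v_{2^{k(n)}}$ can occur, and I would proceed by induction on $m$ (equivalently on the index $i$ of the offending terms), exploiting the identity $v_{2^{k(n)}}^2 = Sq^{2^{k(n)}}v_{2^{k(n)}}$ that comes from the top motivic Steenrod square squaring a class in bidegree $(q)[2q]$, together with the Cartan formula. The relation $u_2\omega_n = 0$ from Remark \ref{doppiovu}, combined with $Sq^1u_2 = u_3 = 0$ in $H(B\Gamma^+_n)$ by Lemma \ref{u3}, $Sq^2 u_2 = u_2^2$, and the instability vanishing $Sq^i u_2 = 0$ for $i > 2$, provides the cancellations needed to fold the Cartan decomposition back into multiples of $\omega_n$. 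The main obstacle is precisely this inductive closure: verifying that at each recursive step the Cartan decomposition of $Sq^j(v_{2^{k(n)}}^i)$ contributes only multiples of $\omega_n$ after $h^*$, with no stray independent classes escaping into $h^*(v_{2^{k(n)}}^{i'})$ for $i'\ge 2$.
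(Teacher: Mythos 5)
Your opening step is not correct: $h^*$ does \emph{not} commute with the Steenrod squares. The map $h^*$ is the composite of the connecting homomorphism of the cofiber sequence (which does commute with $Sq^m$) and the inverse Thom isomorphism, and the Thom isomorphism fails to be $Sq^m$-equivariant because the Thom class $\alpha$ is not annihilated by all positive squares. Indeed, the paper itself uses $Sq^2\alpha = u_2\alpha$ in the proof of Lemma \ref{lambdamu}. Writing out the Cartan formula for $Sq^m(\omega_n\alpha)$ gives
$h^*(Sq^m v_{2^{k(n)}}) = Sq^m\omega_n + \tau^{?}\, u_2\, Sq^{m-2}\omega_n$,
so your identity $Sq^m\omega_n = h^*(Sq^m v_{2^{k(n)}})$ holds only modulo a correction term $u_2 Sq^{m-2}\omega_n$. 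This could in principle be absorbed by an induction on $m$ (if $Sq^{m-2}\omega_n \in \langle\omega_n\rangle$ then the correction is a multiple of $u_2\omega_n=0$), but you do not set up such an induction for small $m$; you instead treat $m<2^{k(n)}$ as immediate, which it is not with the uncorrected identity.

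The more serious gap is the case $m\geq 2^{k(n)}$. You notice this is the hard part, but the plan you sketch risks circularity: controlling $h^*(v_{2^{k(n)}}^i)$ for $i\geq 2$ is precisely the content of Lemma \ref{lambdamu} and Remark \ref{rem3}, and the paper's proof of Lemma \ref{lambdamu} uses Lemma \ref{Sqw}. You also never prove the inductive closure you flag as "the main obstacle," and one should not expect the Cartan unwinding of $Sq^j(v_{2^{k(n)}}^i)$ to close up without that input. The paper sidesteps all of this: it invokes the instability bound $Sq^m\omega_n = 0$ for $m > 2^{k(n)}-1$ (from \cite[Corollary 5.8]{T2}), so nothing needs to be proved there, and for $0< m \leq 2^{k(n)}-1$ it applies $Sq^m$ to the relation $u_2\omega_n = 0$, expands via Cartan, and uses $u_3=0$ together with the inductive hypothesis to get $u_2 Sq^m\omega_n = 0$. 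Gysin exactness then places $Sq^m\omega_n$ in $\Ima(h^*)$, and your degree-counting observation (which is correct and matches the paper's) finishes. You should adopt that structure: it avoids both the false naturality and the circular appeal to the decomposition of $h^*(v_{2^{k(n)}}^i)$.
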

\begin{proof}
	We proceed by induction on $m$. For $m=0$ we have that $Sq^0\omega_n=\omega_n$ and for $m>2^{k(n)}-1$ we have that $Sq^m\omega_n=0$ by \cite[Corollary 5.8]{T2}. Suppose that $Sq^i\omega_n \in \langle \omega_n \rangle$ for $i < m \leq 2^{k(n)}-1$. Then, by Cartan formula, in $H(BSO_n)$ we have that
	$$Sq^m(u_2\omega_n)=u_2Sq^m\omega_n+\tau u_3Sq^{m-1}\omega_n+u_2^2Sq^{m-2}\omega_n.$$
	Therefore, from Remark \ref{doppiovu} and Lemma \ref{u3} we deduce that $u_2Sq^m\omega_n=0$ 
	in $H(B\Gamma^+_n)$, since by induction hypothesis  $Sq^{m-2}\omega_n \in \langle \omega_n \rangle$. It follows that $Sq^m\omega_n\in \Ima(h^*)$. By Remark \ref{rem2}, we know that $Sq^m\omega_n=\sum_{i \geq 1} \phi_i h^*(v_{2^{k(n)}}^i)$ for some $\phi_i \in H(BSO_n)$. But, for any $i\geq 2$, the topological degree of $h^*(v_{2^{k(n)}}^i)$ is $i2^{k(n)}-1 >2^{k(n)+1}-2 \geq m + 2^{k(n)} -1$ that is the topological degree of $Sq^m\omega_n$. Hence, $Sq^m\omega_n=\phi_1 h^*(v_{2^{k(n)}}) =\phi_1 \omega_n $ that is what we aimed to prove. 
\end{proof}

\begin{lem}\label{lambdamu}
	It exists an element $\mu_n$ in $H(B\Gamma^+_n)$ such that $v_{2^{k(n)}}^j=g^*(\mu_n^{[\frac j 2]})v_{2^{k(n)}}^{j-2[\frac j 2]}$ in $H(BSpin_n)$ for any $j \geq 0$.
\end{lem}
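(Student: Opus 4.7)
The stated formula is compatible with multiplication in $j$ in the following sense: if we can find $\mu_n \in H(B\Gamma^+_n)$ with $g^*(\mu_n) = v_{2^{k(n)}}^{2}$, then splitting $j$ by parity gives $v_{2^{k(n)}}^{2s} = g^*(\mu_n^{s}) \cdot v_{2^{k(n)}}^{0}$ and $v_{2^{k(n)}}^{2s+1} = g^*(\mu_n^{s}) \cdot v_{2^{k(n)}}^{1}$, with the cases $j=0,1$ being trivial. Hence the task reduces to constructing a single class $\mu_n$ with $g^*(\mu_n) = v_{2^{k(n)}}^{2}$.

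By exactness of the Gysin long exact sequence of Proposition \ref{spincliff} at the term $H(BSpin_n)$, such a $\mu_n$ exists precisely when $v_{2^{k(n)}}^{2} \in \Ima(g^*)$, equivalently $h^*(v_{2^{k(n)}}^{2}) = 0$. To establish this I would invoke the motivic Serre spectral sequence (cf.\ \cite[Theorem 5.12]{T3}) for the fibre sequence $BSpin_n \to B\Gamma^+_n \to B\Gm$ arising from the short exact sequence $1 \to Spin_n \to \Gamma^+_n \to \Gm \to 1$. In this spectral sequence, $v_{2^{k(n)}}$ lies at filtration zero, and $E_{\infty}^{0,*}$ is identified with the image of the edge homomorphism $g^*$. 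Multiplicativity of the spectral sequence combined with the $\Z/2$-coefficients gives, by the Leibniz rule,
$$d_r(v_{2^{k(n)}}^{2}) = 2 \, v_{2^{k(n)}} \, d_r(v_{2^{k(n)}}) = 0$$
on every page where the formula applies. By the standard characteristic-two argument that squares are permanent cycles in multiplicative $\Z/2$-coefficient spectral sequences, this forces $v_{2^{k(n)}}^{2} \in E_{\infty}^{0,*} = \Ima(g^*)$; take $\mu_n$ to be any lift. The displayed formula for all $j$ then follows by the reduction of the first paragraph.

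The main obstacle is technical rather than conceptual: one must carefully invoke the motivic Serre spectral sequence of \cite[Theorem 5.12]{T3}---originally formulated for the dual sequence $B\Gm \to B\Gamma^+_n \to BSO_n$---in this setting with its multiplicative structure and edge-map identification, and spell out rigorously in the motivic context the permanence of squares in characteristic two, particularly at pages where $v_{2^{k(n)}}$ itself may fail to survive.
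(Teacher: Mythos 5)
Your reduction of the statement to the single claim $v_{2^{k(n)}}^2\in\operatorname{im}(g^*)$, equivalently $h^*(v_{2^{k(n)}}^2)=0$, is exactly right and matches the structure of the paper's proof. The gap is in how you propose to establish it.

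The assertion that ``squares are permanent cycles in multiplicative $\Z/2$-coefficient spectral sequences'' is false, and this is not a technicality that can be spelled out more rigorously --- it is contradicted by Kudo's transgression theorem. The Leibniz rule does give $d_r(x^2)=2x\,d_r(x)=0$ \emph{on the page where $x$ is defined}, so $x^2$ always survives one page longer than $x$; but once $x$ dies, $x^2$ is free to support differentials of its own. Kudo's theorem makes this precise for transgressive classes: if $x\in H^m(F)$ transgresses with $d_{m+1}(x)=\tau(x)$, then $d_{2m+1}(x^2)=Sq^m\tau(x)$, which is generally nonzero. (The Serre spectral sequence of $K(\Z/2,1)\to *\to K(\Z/2,2)$ is the standard counterexample: $x\in H^1$ of the fibre has $d_2(x)=\iota_2$ and $d_3(x^2)=Sq^1\iota_2\neq 0$.) In the present situation the class $v_{2^{k(n)}}$ is precisely the kind of class that may support a nontrivial $h^*$ (indeed $h^*(v_{2^{k(n)}})=\omega_n$, which is nonzero in the interesting case), so one is squarely in the regime where Kudo-type higher differentials on $v_{2^{k(n)}}^2$ are a genuine threat, and they must be computed rather than dismissed.

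The paper disposes of the threat by a direct calculation that exploits the specific structure of this problem. Using $v_{2^{k(n)}}^2=Sq^{2^{k(n)}}v_{2^{k(n)}}$, it writes
\[
h^*(v_{2^{k(n)}}^2)=Sq^{2^{k(n)}}(\omega_n\alpha)=\sum_{i}\tau^{i\bmod 2}\,Sq^{2^{k(n)}-i}\omega_n\,Sq^i\alpha
\]
where $\alpha$ is the Thom class of $BSpin_n\to B\Gamma^+_n$, and then kills every term: $Sq^1\alpha=0$ and $Sq^i\alpha=0$ for $i\geq 3$ for degree reasons, $Sq^2\alpha=u_2\alpha$, $Sq^{2^{k(n)}}\omega_n=0$, and the one surviving term $Sq^{2^{k(n)}-2}\omega_n\cdot u_2\alpha$ vanishes because $Sq^{2^{k(n)}-2}\omega_n\in\langle\omega_n\rangle$ (Lemma \ref{Sqw}) and $u_2\omega_n=0$ (Remark \ref{doppiovu}). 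None of this is a formal consequence of multiplicativity; Lemma \ref{Sqw} and the relation $u_2\omega_n=0$ are the specific inputs that make the ``Kudo correction term'' vanish. Your proposal would need to be replaced by this (or an equivalent) computation.
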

\begin{proof}
	For $j=0,1$ the statement is tautological. For $j=2$, by Cartan formula, we have that
	$$h^*(v_{2^{k(n)}}^2)=h^*(Sq^{2^{k(n)}}v_{2^{k(n)}})=Sq^{2^{k(n)}}(\omega_n\alpha)=\sum_{i=0}^{2^{k(n)}}\tau^{i \: mod 2}Sq^{2^{k(n)}-i}\omega_n Sq^i\alpha$$
	where $\alpha$ is the Thom class of the map $BSpin_n \rightarrow B\Gamma^+_n$. Note that, by Proposition \ref{spincliff} and Lemma \ref{surjme}, $H^{p-2,*'-1}(B\Gamma^+_n) \xrightarrow{\cdot u_2} H^{p,*'}(B\Gamma^+_n)$ is a monomorphism for $p \leq 2^{k(n)}$. This implies, in particular, that $Sq^1\alpha=0$ and $Sq^2\alpha=u_2\alpha$. Moreover, $Sq^i\alpha=0$ for $i \geq 3$, since $\alpha$ is in bidegree $(1)[2]$, and $Sq^{2^{k(n)}}\omega_n=0$. Therefore, we have that
	$$h^*(v_{2^{k(n)}}^2)=Sq^{2^{k(n)}-2}\omega_nu_2\alpha=0$$
	since, by Lemma \ref{Sqw}, $Sq^{2^{k(n)}-2}\omega_n\in \langle \omega_n\rangle $ and $u_2\omega_n=0$ by Remark \ref{doppiovu}. Hence, $v_{2^{k(n)}}^2 \in \Ima(g^*)$.
	
	Let $\mu_n$ be a class in $H(B\Gamma^+_n)$ such that $g^*(\mu_n)=v_{2^{k(n)}}^2$. Suppose the statement is true for $i < j$, then
	$$v_{2^{k(n)}}^j=v_{2^{k(n)}}^2v_{2^{k(n)}}^{j-2}=g^*(\mu_n)g^*(\mu_n^{[\frac {j-2} 2]})v_{2^{k(n)}}^{j-2-2[\frac {j-2} 2]}=g^*(\mu_n^{[\frac j 2]})v_{2^{k(n)}}^{j-2[\frac j 2]}$$
	that concludes the proof.
\end{proof}

\begin{rem}\label{rem3}
	\normalfont
	It immediately follows from Lemma \ref{lambdamu} that 
	\begin{align*}
		h^*(v_{2^{k(n)}}^j)=
		\begin{cases}
			0, &for \: j \: even;\\
			\mu_n^{\frac{j-1}2}\omega_n, & for \: j \: odd.
			\end{cases}
		\end{align*}
\end{rem}

The following is the main result of this paper.

	\begin{thm}\label{MQ2}
		For any $n \geq 2$, there exists a cohomology class $e_{2^{l(n)}}$ in bidegree $(2^{l(n)-1})[2^{l(n)}]$ such that the natural homomorphism of $H$-algebras 
		$$H(BSO_n)/I^{\circ}_{l(n)} \otimes_H H[e_{2^{l(n)}}] \rightarrow H(B\Gamma^+_n)$$
		is an isomorphism, where $I^{\circ}_{l(n)}$ is the ideal generated by $\theta_1, \dots ,\theta_{l(n)-1}$ and $l(n)=[\frac{n+1}{2}]$.
	\end{thm}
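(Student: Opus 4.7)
The plan is to apply Proposition \ref{vp} and identify the resulting ideal with $I^{\circ}_{l(n)}$. The case $n=2$ is Proposition \ref{n2} with $e_4 := e_2$, so I assume $n \geq 3$ and split according to Remark \ref{kcasi}: Case A with $l(n) = k(n)$ and $\theta_{k(n)} \in I^{\circ}_{k(n)}$, or Case B with $l(n) = k(n)+1$ and $\theta_{k(n)} \notin I^{\circ}_{k(n)}$.

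In Case B, I would take $e_{2^{l(n)}} := \mu_n$ from Lemma \ref{lambdamu}: its bidegree $(2^{k(n)})[2^{k(n)+1}]$ matches $(2^{l(n)-1})[2^{l(n)}]$, and $g^*(\mu_n) = v_{2^{k(n)}}^2$ is monic of degree $2$ in $v_{2^{k(n)}}$. In Case A, I would first argue $\omega_n = 0$ in $H(B\Gamma^+_n)$, so that by exactness of the Gysin sequence in Proposition \ref{spincliff} the class $v_{2^{k(n)}}$ lifts under $g^*$; then I take $e_{2^{l(n)}}$ to be any such lift, yielding $g^*(e_{2^{l(n)}}) = v_{2^{k(n)}}$, monic of degree $1$, with the correct bidegree.

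With $p : H(BSO_n) \otimes_H H[e_{2^{l(n)}}] \to H(B\Gamma^+_n)$ the canonical map, I would then verify the two hypotheses of Proposition \ref{vp}. The image condition $\Ima(h^*) = \Ima(p) \cdot \omega_n$ follows from Remarks \ref{rem2} and \ref{rem3}, which present $\Ima(h^*)$ as the $H(BSO_n)$-module generated by the classes $\mu_n^j \omega_n$, and $\mu_n$ is a polynomial in $e_{2^{l(n)}}$ (namely $e_{2^{l(n)}}^2$ modulo $\ker(g^*)$ in Case A, and $e_{2^{l(n)}}$ itself in Case B). The kernel condition $\ker(h^*) = \Ima(g^*p)$ follows by combining the $H(BSO_n)/I_{k(n)}$-module structure on $H(BSpin_n)$ from Theorem \ref{bso} with Remark \ref{rem3}, tracking which powers of $v_{2^{k(n)}}$ appear. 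Proposition \ref{vp}(2) then yields
$$H(BSO_n)/\bigl(I^{\circ}_{k(n)} + (u_2\omega_n)\bigr) \otimes_H H[e_{2^{l(n)}}] \xrightarrow{\sim} H(B\Gamma^+_n).$$

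The final step is the identification $I^{\circ}_{k(n)} + (u_2\omega_n) = I^{\circ}_{l(n)}$, which rests on the key congruence $u_2 \tilde{\omega}_n \equiv \theta_{k(n)} \pmod{I^{\circ}_{k(n)}}$ in $H(BSO_n)$ for any lift $\tilde{\omega}_n$ of $\omega_n$. In Case B, this turns $(u_2\omega_n)$ into $(\theta_{k(n)})$ modulo $I^{\circ}_{k(n)}$, giving $I^{\circ}_{k(n)+1} = I^{\circ}_{l(n)}$; in Case A, combined with $\theta_{k(n)} \in I^{\circ}_{k(n)}$ it forces $u_2 \tilde{\omega}_n \in I^{\circ}_{k(n)}$, from which $\omega_n = 0$ would follow by exploiting the regular sequence of Corollary \ref{tauseq} and the polynomial structure of $H(BSO_n)$. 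The main obstacle is establishing this congruence, since the two sides arise from rather different constructions (Gysin for the $\Gm$-bundle $BSpin_n \to B\Gamma^+_n$ versus iterated Steenrod operations) despite inhabiting the same bidegree $(2^{k(n)-1})[2^{k(n)}+1]$. The natural attack is through the topological comparison maps $i$, $h$, $t$ of \cite[Section 7]{T2}: by Lemma \ref{th}, $\theta_{k(n)} = h(\rho_{k(n)})$, and Theorem \ref{Q2} identifies $\rho_{k(n)}$ as precisely the relation distinguishing $BSpin^c_n$ from $BSpin_n$ topologically, so lifting this relation through the multiplicative map $h$ (controlled by Lemma \ref{h}) should produce the motivic congruence.
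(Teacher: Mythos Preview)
Your overall architecture is close to the paper's, but two points are inverted or underspecified in ways that matter. First, the paper case-splits on whether $\omega_n=0$ or $\omega_n\neq 0$ and then \emph{deduces} which of $l(n)=k(n)$ or $l(n)=k(n)+1$ holds; you go the other way, which forces you to prove $\omega_n=0$ in Case~A before you can even set up $p$. Second, and more importantly, the congruence $u_2\tilde\omega_n\equiv\theta_{k(n)}\pmod{I^{\circ}_{k(n)}}$ is not obtained in the paper via topological comparison at all: it drops out of Proposition~\ref{vp}(1). Once $\Ima(h^*)=\Ima(p)\cdot\omega_n$ is checked (Remarks~\ref{rem2} and \ref{rem3}), part~(1) gives $\ker(p)=J^{\circ}_{k(n)}+(u_2\omega_n)$; since $\theta_{k(n)}$ vanishes in $H(B\Gamma^+_n)$ by Lemma~\ref{u3}, it lies in this kernel, and a bidegree count forces $\theta_{k(n)}+u_2\omega_n\in I^{\circ}_{k(n)}$. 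Your proposed route through $h$, $t$ and Theorem~\ref{Q2} is plausible but has a real gap: there is no reason given why $h(\tilde\omega_n^{top})$ should be a lift of the motivically defined $\omega_n$, since the two arise from different Gysin sequences and $h$ is only an additive section, not a comparison of fibrations.

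The other genuine gap is your treatment of $\ker(h^*)=\Ima(g^*p)$ in Case~B. ``Tracking which powers of $v_{2^{k(n)}}$ appear'' only gives the easy inclusion $\Ima(g^*p)\subseteq\ker(h^*)$ and shows that even powers lie in $\ker(h^*)$; the issue is an odd-power contribution $\sum_{j\ \mathrm{odd}}\gamma_j v_{2^{k(n)}}^j$ with $\sigma\omega_n=0$ where $\sigma=\sum\gamma_j\mu_n^{(j-1)/2}$. One must show such a $\sigma$ already lies in $J_{k(n)}$, so that $g^*p(\sigma)=0$. The paper does this by combining the identity $\theta_{k(n)}+u_2\omega_n\in I^{\circ}_{k(n)}$ with the regularity of $\theta_1,\dots,\theta_{k(n)}$ from Theorem~\ref{MQ1}: from $\sigma\omega_n\in J^{\circ}_{k(n)+1}$ one manipulates to $(\sigma+u_2\sigma_{k(n)})\theta_{k(n)}\in J^{\circ}_{k(n)}$, and regularity yields $\sigma\in J^{\circ}_{k(n)}+(u_2)=J_{k(n)}$. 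This step is the heart of the proof and is not captured by your sketch.
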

	\begin{proof}
	For $n=2$ this is given by Proposition \ref{n2}, so suppose from now on that $n \geq 3$.
	
	If $\omega_n=0$, then there is a class $e_{2^{k(n)}}$ in $H(B\Gamma^+_n)$ such that $g^*(e_{2^{k(n)}})=v_{2^{k(n)}}$. Let $p$ be the homomorphism $H(BSO_n) \otimes_H H[e_{2^{k(n)}}] \rightarrow H(B\Gamma^+_n)$. Then, $\Ima(h^*)=0=\Ima(p) \cdot \omega_n$ and $\ker(h^*)=H(BSpin_n)=\Ima(g^*p)$. Hence, Proposition \ref{vp} implies that the homomorphism
		$$H(BSO_n)/I^{\circ}_{k(n)} \otimes_H H[e_{2^{k(n)}}] \rightarrow H(B\Gamma^+_n)$$
		is an isomorphism. Since $\theta_{k(n)}$ vanishes in $H(B\Gamma^+_n)$ we have that $\theta_{k(n)} \in I^{\circ}_{k(n)}$, which means that $k(n)=l(n)$ by Remark \ref{kcasi}.
		
	If $\omega_n \neq 0$, then set $p:H(BSO_n) \otimes_H H[\mu_n] \rightarrow H(B\Gamma^+_n)$ where $\mu_n$ is the class from Lemma \ref{lambdamu}. It follows from Remarks \ref{rem2} and \ref{rem3} that $\Ima(h^*)=\Ima(p) \cdot \omega_n$. Then, by Proposition \ref{vp}, we obtain that $\ker(p)=J^{\circ}_{k(n)}+(u_2\omega_n)$.
		
		 Since $\omega_n\neq 0$, we can extend the result in Lemma \ref{surjme} to the degree $p=2^{k(n)}$, i.e. we have that $H^{2^{k(n)},2^{{k(n)}-1}}(BSO_n) \rightarrow H^{2^{k(n)},2^{{k(n)}-1}}(B\Gamma^+_n)$ is surjective. Hence, $H^{2^{k(n)},2^{{k(n)}-1}}(B\Gamma^+_n) \rightarrow H^{2^{k(n)},2^{{k(n)-1}}}(N)$ 
		is zero and $H^{2^{k(n)},2^{{k(n)-1}}}(N) \rightarrow H^{2^{k(n)}+1,2^{{k(n)}-1}}(BSO_n)$ is injective. It follows that $\theta_{k(n)} \notin I^{\circ}_{k(n)}$, since $x_{k(n)} \notin \langle x_1, \dots ,x_{{k(n)}-1}\rangle$ by Lemma \ref{x}, and $k(n)+1=l(n)$ by Remark \ref{kcasi}. Observe that, as we have already shown, $\ker(p)=J^{\circ}_{k(n)}+(u_2\omega_n)$ and $\theta_{k(n)}$ vanishes in $H(B\Gamma^+_n)$. Therefore, $\theta_{k(n)}+u_2\omega_n \in I^{\circ}_{k(n)}$, which implies that $\ker(p)=J^{\circ}_{{k(n)}+1}=J^{\circ}_{l(n)}$.
		
		Now, it remains to prove that $\ker(h^*)=\Ima(g^*p)$. Obviously, $\Ima(g^*p) \subseteq \ker(h^*)$, so we only have to prove the other side inclusion. Let $x$ be an element of $\ker(h^*)$. We can write $x$ as $\sum_{j=0}^m\gamma_jv_{2^{k(n)}}^j$ with $\gamma_j \in H(BSO_n)$. Then, by Remark \ref{rem3}, we have that $\sum_{j=1,odd}^m\gamma_j\mu_n^{{\frac {j-1}2 }}\omega_n=0$. Denote by $\sigma$ the element $\sum_{j=1,odd}^m\gamma_j\mu_n^{{\frac {j-1}2 }}$ in $H(BSO_n) \otimes_H H[\mu_n]$. From $p(\sigma \omega_n)=0$ we deduce that $\sigma \omega_n \in J^{\circ}_{{k(n)}+1}$, since we have shown that $\ker(p)=J^{\circ}_{{k(n)}+1}$. Thus, $\sigma \omega_n=\sum_{j=1}^{k(n)}\sigma_j\theta_j$ for some $\sigma_j \in H(BSO_n) \otimes_H H[\mu_n]$ and, multiplying by $u_2$, we obtain that $u_2\sigma \omega_n+u_2\sigma_{k(n)} \theta_{k(n)} \in J^{\circ}_{k(n)}$. On the other hand, $\theta_{k(n)}+u_2\omega_n \in I^{\circ}_{k(n)}$, from which it follows by multiplying by $\sigma$ that $\sigma \theta_{k(n)}+u_2\sigma \omega_n \in J^{\circ}_{k(n)}$. Hence, $(\sigma+u_2\sigma_{k(n)})\theta_{k(n)} \in J^{\circ}_{k(n)}$. Theorem \ref{MQ1} implies that $\sigma+u_2\sigma_{k(n)} \in J^{\circ}_{k(n)}$, from which it follows that $\sigma \in J^{\circ}_{k(n)}+(u_2)=J_{k(n)}$. Therefore, $g^*p(\sigma)=0$ in $H(BSpin_n)$ and by Lemma \ref{lambdamu}
			$$x=\sum_{j=1,odd}^m\gamma_jg^*(\mu_n^{\frac {j-1} 2})v_{2^{k(n)}} +\sum_{j=0,even}^m\gamma_jg^*(\mu_n^{\frac j 2})=g^*p(\sigma)v_{2^{k(n)}}+\sum_{j=0,even}^m\gamma_jg^*(\mu_n^{\frac j 2})=\sum_{j=0,even}^m\gamma_jg^*(\mu_n^{\frac j 2})$$
			is an element of $\Ima(g^*p)$.
	
	    Rename the class $\mu_n$ by $e_{2^{l(n)}}$. Then, by Proposition \ref{vp} we have that the homomorphism
		$$H(BSO_n)/I^{\circ}_{{l(n)}} \otimes_H H[e_{2^{{l(n)}}}] \rightarrow H(B\Gamma^+_n)$$
		is an isomorphism, and the proof is complete.
	\end{proof}

\begin{dfn}
Denote by ${\mathrm Chern}(B_{\acute{e}t}\Gamma^+_n)$ the subring of the Chow ring with $\Z/2$-coefficients $Ch(B_{\acute{e}t}\Gamma^+_n)$ generated by the Chern classes of the representation $\Gamma^+_n \rightarrow SO_n$.
\end{dfn} 

For any $2 \leq i \leq n$, let $\widetilde{w}_i$ be the Stiefel-Whitney class in $H^{i,i}(B_{\acute et} SO_n)$. Recall from \cite[Theorem 3.1.1]{SV} that the homomorphism $H(B_{\acute et} SO_n) \rightarrow H(BSO_n)$, induced by the canonical map $BSO_n \rightarrow B_{\acute et}SO_n$, sends $\widetilde{w}_i$ to $\tau^{[{\frac{i+1}{2}}]}u_i$.

\begin{lem}\label{w3}
	The homomorphism $H(B_{\acute et} SO_n) \rightarrow H(B_{\acute{e}t}\Gamma^+_n)$ maps $Sq^1\widetilde{w}_2$ to $0$.
	\end{lem}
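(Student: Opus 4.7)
The plan is to chase $Sq^1\widetilde{w}_2$ around the commutative square
$$
\xymatrix{
	H(B_{\acute{e}t}SO_n) \ar@{->}[r] \ar@{->}[d] & H(B_{\acute{e}t}\Gamma^+_n) \ar@{->}[d]\\
	H(BSO_n) \ar@{->}[r] & H(B\Gamma^+_n)
}
$$
induced by the canonical maps $BG \to B_{\acute{e}t}G$ for $G = SO_n,\Gamma^+_n$ combined with the group homomorphism $\Gamma^+_n \to SO_n$, and then to upgrade a Nisnevich-level vanishing to the étale level.

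The computation along the left vertical is immediate. By the formula recalled just above the lemma, $\widetilde{w}_2 \mapsto \tau u_2$. Since Steenrod operations commute with any pullback and since over a field containing $\sqrt{-1}$ all $Sq^i$ act trivially on $H$, in particular $Sq^1\tau = 0$, Cartan's formula gives
$$Sq^1\widetilde{w}_2 \longmapsto Sq^1(\tau u_2) = \tau\cdot Sq^1 u_2 = \tau\theta_1 = \tau u_3$$
in $H(BSO_n)$. Lemma~\ref{u3} then gives $u_3 = 0$ in $H(B\Gamma^+_n)$, so $\tau u_3 = 0$ there as well. By commutativity of the square, the image of $Sq^1\widetilde{w}_2$ in $H(B_{\acute{e}t}\Gamma^+_n)$ dies under the canonical right vertical $H(B_{\acute{e}t}\Gamma^+_n)\to H(B\Gamma^+_n)$.

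The remaining point, which is the main obstacle, is to promote this to vanishing already in $H(B_{\acute{e}t}\Gamma^+_n)$; equivalently, to control the kernel of $H^{3,2}(B_{\acute{e}t}\Gamma^+_n) \to H^{3,2}(B\Gamma^+_n)$ in this specific bidegree. The conceptual route is via the universal-property interpretation: $Sq^1\widetilde{w}_2$ is the motivic shadow of the classical obstruction $W_3 \bmod 2$ to the existence of a $Spin^c$-structure on an orthogonal bundle, and hence vanishes tautologically when pulled back to the classifying space of $\Gamma^+_n$-torsors, since these are exactly the $SO_n$-torsors admitting such a lift, i.e.\ those with trivial Clifford invariant, as recalled at the beginning of this section. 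Alternatively, over a field containing $\sqrt{-1}$ one can use the identification $H^{p,q}(B_{\acute{e}t}G,\Z/2) \cong H^p_{\acute{e}t}(BG,\Z/2)$ together with comparison to singular cohomology, whereupon the assertion reduces to the classical vanishing of $Sq^1 w_2 = w_3$ in $H^3(BSpin^c(n),\Z/2)$, which follows directly from the very definition of a $Spin^c$-structure.
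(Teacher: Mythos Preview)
Your diagram chase and the computation $Sq^1\widetilde{w}_2 \mapsto \tau u_3 \mapsto 0$ in $H(B\Gamma^+_n)$ are fine and match the paper exactly. The gap is in the last paragraph: neither of your two proposed upgrades actually works.

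The ``conceptual route'' via obstruction theory is only a heuristic. Knowing that $\Gamma^+_n$-torsors have trivial Clifford invariant does not by itself identify the kernel of $H^{3,2}(B_{\acute{e}t}\Gamma^+_n)\to H^{3,2}(B\Gamma^+_n)$ in motivic cohomology; you would need to prove that $Sq^1\widetilde{w}_2$ is precisely the universal obstruction class in the right bidegree, and this is essentially the statement you are trying to establish. Your ``alternative'' is based on an identification $H^{p,q}(B_{\acute{e}t}G,\Z/2)\cong H^p_{\acute{e}t}(BG,\Z/2)$ that is false in general: Beilinson--Lichtenbaum gives an isomorphism only for $p\le q$, and here $(p,q)=(3,2)$. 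The comparison with singular cohomology moreover needs an embedding of $k$ into $\mathbb{C}$, which is not part of the hypotheses.

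The paper's argument uses exactly the borderline case of Beilinson--Lichtenbaum: for $p=q+1$ the change-of-topology map $H^{p,q}\to H^{p,q}_{\acute{e}t}$ is still a \emph{monomorphism} (\cite[Corollary 6.9]{V3}). Since $B_{\acute{e}t}\Gamma^+_n$ and $B\Gamma^+_n$ become equivalent after passing to the \'etale topology, the composite
\[
H^{3,2}(B_{\acute{e}t}\Gamma^+_n)\longrightarrow H^{3,2}(B\Gamma^+_n)\longrightarrow H^{3,2}_{\acute{e}t}(B\Gamma^+_n)\cong H^{3,2}_{\acute{e}t}(B_{\acute{e}t}\Gamma^+_n)
\]
is injective, hence so is the first arrow. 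This is the missing ingredient: once you have injectivity of $H^{3,2}(B_{\acute{e}t}\Gamma^+_n)\to H^{3,2}(B\Gamma^+_n)$, your diagram chase finishes the proof.
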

\begin{proof}
	Note that the homomorphism $H^{3,2}(B_{\acute{e}t}\Gamma^+_n) \rightarrow H^{3,2}(B\Gamma^+_n)$ is injective, since the change of topology $H^{3,2}(B_{\acute{e}t}\Gamma^+_n) \rightarrow H_{\acute et}^{3,2}(B_{\acute et}\Gamma^+_n) \cong H_{\acute et}^{3,2}(B\Gamma^+_n)$, which factors through $H^{3,2}(B\Gamma^+_n)$, is a monomorphism by \cite[Corollary 6.9]{V3}.
	
	On the other hand, the homomorphism $H^{3,2}(B_{\acute{e}t}SO_n) \rightarrow H^{3,2}(BSO_n)$ maps $Sq^1\widetilde{w}_2$ to $Sq^1(\tau u_2)=\tau u_3$ that vanishes in $H^{3,2}(B\Gamma^+_n)$. Hence, $Sq^1\widetilde{w}_2$ maps to 0 in $H^{3,2}(B_{\acute{e}t}\Gamma^+_n)$ that completes the proof.
	\end{proof}

\begin{rem}\label{mserv}
	\normalfont
	As noted in \cite[Remark 11.3]{T2}, the class $\tau \theta_i^2$ belongs to the Chern subring  ${\mathrm Chern}(B_{\acute{e}t}SO_n) \cong \Z/2[c_2,\dots,c_n]$, for any $i \geq 1$.
	
	Then, \cite[Lemma 11.2]{T2} and Lemma \ref{w3} imply that $\tau \theta_i^2 = \tau Sq^1\theta_{i+1} =
	Sq^1Sq^{2^i} \cdots Sq^1\widetilde{w}_2$ vanishes in ${\mathrm Chern}(B_{\acute{e}t}\Gamma^+_n)$ for all $i \geq 1$.
\end{rem}

The following result provides a complete description of ${\mathrm Chern}(B_{\acute{e}t}\Gamma^+_n)$ modulo nilpotents.

\begin{cor}\label{chern}
	There exists a ring isomorphism
	$${\mathrm Chern}(B_{\acute{e}t}\Gamma^+_n)_{red}\cong \Z/2[c_2,\dots,c_n]/\sqrt{(\tau \theta_1^2,\dots,\tau \theta_{l(n)-1}^2)}$$
	where $c_i=\tau^{i \: mod2}u_i^2$ is the {\emph i}th Chern class in $H(BSO_n)$.
\end{cor}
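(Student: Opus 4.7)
The plan is to realize ${\mathrm Chern}(B_{\acute{e}t}\Gamma^+_n)$ as a quotient of $R := {\mathrm Chern}(B_{\acute{e}t}SO_n) \cong \Z/2[c_2,\dots,c_n]$ via the surjection induced by the representation $\Gamma^+_n \to SO_n$, and then to compute the radical of its kernel $J$. The inclusion $(\tau\theta_1^2,\dots,\tau\theta_{l(n)-1}^2) \subseteq J$ is immediate from Remark \ref{mserv}, which asserts both that each $\tau\theta_i^2$ belongs to $R$ and that it vanishes in ${\mathrm Chern}(B_{\acute{e}t}\Gamma^+_n)$; taking radicals gives one half of the claimed identity.

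For the reverse inclusion, I would factor the map $R \to {\mathrm Chern}(B_{\acute{e}t}\Gamma^+_n)$ through motivic cohomology. The cycle class map followed by the change of topology $H(B_{\acute{e}t}\Gamma^+_n) \to H(B\Gamma^+_n)$ produces a homomorphism ${\mathrm Chern}(B_{\acute{e}t}\Gamma^+_n) \to H(B\Gamma^+_n)$, and by naturality the composite $R \to {\mathrm Chern}(B_{\acute{e}t}\Gamma^+_n) \to H(B\Gamma^+_n)$ agrees with the map $R \hookrightarrow H(BSO_n) \to H(B\Gamma^+_n)$ sending $c_i$ to $\tau^{i \bmod 2}u_i^2$. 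By Theorem \ref{MQ2}, the kernel of $H(BSO_n) \to H(B\Gamma^+_n)$ is precisely $I^{\circ}_{l(n)}$, so $J \subseteq R \cap I^{\circ}_{l(n)}$; it therefore suffices to prove $\sqrt{R \cap I^{\circ}_{l(n)}} \subseteq \sqrt{(\tau\theta_1^2,\dots,\tau\theta_{l(n)-1}^2)}$ inside $R$.

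The crux is this last inclusion. Given $f \in R \cap I^{\circ}_{l(n)}$, I would write $f = \sum_{i=1}^{l(n)-1} \phi_i\theta_i$ in $H(BSO_n)$ and apply the topological reduction $t:H(BSO_n) \to H_{top}(BSO_n) = \Z/2[w_2,\dots,w_n]$ introduced before Lemma \ref{h}. Using $t(\theta_i)=\rho_i$ from Lemma \ref{th} and squaring in characteristic two yields
\[
t(f)^2 = \sum_{i=1}^{l(n)-1} t(\phi_i)^2\,\rho_i^2,
\]
where each $t(\phi_i)^2$ lies in the subring $\Z/2[w_2^2,\dots,w_n^2]$ by Frobenius. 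The restriction $t|_R: R \to \Z/2[w_2^2,\dots,w_n^2]$ sends $c_i$ to $w_i^2$ and is thus an isomorphism of polynomial rings; moreover it carries $\tau\theta_i^2$ to $\rho_i^2$ because $t(\tau)=1$. Pulling the displayed identity back through $t|_R^{-1}$ gives $f^2 \in (\tau\theta_1^2,\dots,\tau\theta_{l(n)-1}^2)$ inside $R$, so $f$ lies in the radical of this ideal, completing the argument. The only real obstacle I foresee is the naturality check underlying $J \subseteq R \cap I^{\circ}_{l(n)}$, but this parallels the analogous spin-case treatment in \cite{T2}.
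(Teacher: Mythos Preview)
Your proposal is correct and follows the same overall strategy as the paper: one inclusion comes from Remark \ref{mserv}, the other from factoring through $H(B\Gamma^+_n)$ and invoking Theorem \ref{MQ2} to bound the kernel by $\iota^{-1}(I^{\circ}_{l(n)})$, after which the result reduces to the radical identity $\sqrt{(\tau\theta_1^2,\dots,\tau\theta_{l(n)-1}^2)}=\sqrt{\iota^{-1}(I^{\circ}_{l(n)})}$ in $R$. The paper records this last identity as an observation without argument; your Frobenius trick via the topological realization $t$ (using that $t|_R:R\to\Z/2[w_2^2,\dots,w_n^2]$ is an isomorphism carrying $\tau\theta_i^2$ to $\rho_i^2$, and that squaring lands any $t(f)=\sum t(\phi_i)\rho_i$ in the subring generated by squares) is a clean and explicit way to supply it.
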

\begin{proof}
		Let ${\mathcal I}^{\circ}_n$ be the kernel of the epimorphism $\Z/2[c_2,\dots,c_n] \rightarrow {\mathrm Chern}(B_{\acute{e}t}\Gamma^+_n)$. Then, by Remark \ref{mserv} we have that $(\tau \theta_1^2,\dots,\tau \theta_{l(n)-1}^2) \subseteq {\mathcal I}^{\circ}_n$. On the other hand, since the epimorphism $\Z/2[c_2,\dots,c_n] \rightarrow {\mathrm Chern}(B\Gamma^+_n)$ factors through ${\mathrm Chern}(B_{\acute{e}t}\Gamma^+_n)$, Theorem \ref{MQ2} implies that ${\mathcal I}^{\circ}_n \subseteq \iota^{-1}(I^{\circ}_{l(n)})$, where $\iota:\Z/2[c_2,\dots,c_n] \rightarrow H(BSO_n)$ is the inclusion of the Chern subring of $H(BSO_n)$. 
		
		Now, observe that $\sqrt{(\tau \theta_1^2,\dots,\tau \theta_{l(n)-1}^2)} = \sqrt{\iota^{-1}(I^{\circ}_{l(n)})}$. Therefore, we obtain that
		$${\mathrm Chern}(B_{\acute{e}t}\Gamma^+_n)_{red}\cong \Z/2[c_2,\dots,c_n]/\sqrt{{\mathcal I}^{\circ}_n}\cong \Z/2[c_2,\dots,c_n]/\sqrt{(\tau \theta_1^2,\dots,\tau \theta_{l(n)-1}^2)}$$
		that is what we aimed to show.
	\end{proof}

\begin{rem}
	\normalfont 
	Note that the relations appearing in Corollary \ref{chern} are also expressible in terms of the action of some Steenrod operations on $c_2$. More precisely, we have that $\tau\theta_j^2=Sq^{2^j}Sq^{2^{j-1}} \cdots Sq^4Sq^2c_2$, for any $j \geq 1$.
\end{rem}
	
\footnotesize{

\noindent {\scshape Fachbereich Mathematik, Technische Universit{\"a}t Darmstadt}\\
fabio.tanania@gmail.com
	
\end{document}